\documentclass[12pt]{amsart}
\makeindex
\usepackage[latin1]{inputenc} \usepackage[T1]{fontenc} \usepackage{lmodern}
\usepackage{epsfig,graphicx,color,amsmath,a4wide,amssymb,amsfonts,amsbsy,xy,latexsym,epsf,pstricks,verbatim,times,hyperref}

\usepackage{enumerate}
\usepackage{pgf,color}
\usepackage{pgfarrows}

\xyoption{all}

\definecolor{Grey}{rgb}{.5,.5,.5}

\definecolor{Blue}{rgb}{.0,.0,0.9}
\definecolor{LightBlue1}{rgb}{.2,.4,0.9}
\definecolor{LightBlue2}{rgb}{.3,.5,0.9}
\definecolor{LightBlue3}{rgb}{.4,.6,0.9}
\definecolor{LightBlue4}{rgb}{.5,.7,.9}
\definecolor{LightBlue5}{rgb}{.6,.8,.9}
\definecolor{LightBlue6}{rgb}{.7,.9,.9}

\definecolor{Red}{rgb}{.9,.0,.0}
\definecolor{LightRed1}{rgb}{0.9,.2,.4}
\definecolor{LightRed2}{rgb}{0.9,.3,.5}
\definecolor{LightRed3}{rgb}{0.9,.4,.6}
\definecolor{LightRed4}{rgb}{.9,.5,.7}
\definecolor{LightRed5}{rgb}{.9,.6,.8}
\definecolor{LightRed6}{rgb}{.9,.7,.9}

\xyoption{all}
\usepackage[english]{babel}

\newcounter{noalgo}[section]
\setcounter{noalgo}{0}\newdimen\indentalgo
\newdimen\indentalgodec\indentalgo=0.0mm\indentalgodec=10mm

\newcommand{\If}{\advance\indentalgo by \indentalgodec {\bf if }}

\newcommand{\For}{\global\advance\indentalgo by \indentalgodec {\bf for }}

\newcommand{\Endindent}{\global\advance\indentalgo by -\indentalgodec}

\newdimen\decalage \decalage=0.5cm
\newcounter{algo} \setcounter{algo}{0}

\def\<<{\leavevmode
  \raise0.28ex\hbox{$\scriptscriptstyle\langle\!\langle$}\nobreak
  \hskip -.6pt plus.3pt minus.2pt\,}
\def\>>{\,\nobreak\hskip -.6pt plus.3pt minus.2pt
  \raise0.28ex\hbox{$\scriptscriptstyle\rangle\!\rangle$}}


\def\<<{\leavevmode
  \raise0.28ex\hbox{$\scriptscriptstyle\langle\!\langle$}\nobreak
  \hskip -.6pt plus.3pt minus.2pt\,}
\def\>>{\,\nobreak\hskip -.6pt plus.3pt minus.2pt
  \raise0.28ex\hbox{$\scriptscriptstyle\rangle\!\rangle$}}



\newtheorem{definition}{D{e}finition}
\newtheorem{prop}{Proposition}

\newtheorem{theorem}{Theorem}

\newtheorem{lemma}{Lemma}

\def\bK{{\mathbf L}}

\def\bF{{\mathbf F}}

\def\bK{{\mathbf K}}

\def\bK{{\mathbf K}}
\def\bL{{\mathbf L}}

\providecommand{\myproofname}{Proof}

\begin{document}

\author{Tony Ezome and Mohamadou Sall}
\title{On finite field arithmetic in characteristic $2$}

\thanks{Research supported by Simons
 Foundation, Inria International Lab LIRIMA, and ICTP}

\maketitle

\begin{abstract}
We are interested in extending normal bases 
of $\mathbf{F}_{\!2^n}/\mathbf{F}_{\!2}$ to bases
of $\mathbf{F}_{\!2^{nd}}/\mathbf{F}_{\!2}$
which allow fast arithmetic in $\mathbf{F}_{\!2^{nd}}$.
This question has been recently studied
by Thomson and Weir in case $d$ is equal to $2$.
We construct efficient extended bases in case $d$ is
equal to $3$ and $4$. We also give conditions under which
Thomson-Weir construction can be combined with
ours.
\end{abstract}


.

\vspace{.75cm}

\section{Introduction}

Representing elements of a finite
field extension $\mathbf{F}_{\!\!q^m}/\mathbf{F}_{\!\!q}$ by using 
normal bases is adequate when doing arithmetic
in $\mathbf{F}_{\!\!q^m}$.
The main computational advantage of these bases is that they allow fast 
exponentiation by $q$,
this corresponds simply to a cyclic shift of coordinates.
When computing arbitrary products in $\mathbf{F}_{\!\!q^m}$,
Gao, von zur Gathen, Panario and Shoup \cite{Gao-Gathen-Panario-Shoup}
showed that fast multiplication methods such as FFT can be adapted 
to normal bases of $\mathbf{F}_{\!\!q^m}/\mathbf{F}_{\!\!q}$ constructed
from Gauss periods over $\mathbf{F}_{\!\!q}$.
On the other hand, Couveignes and Lercier \cite{Couveignes-Lercier2} 
constructed an FFT-like multiplication algorithm with normal bases
of $\mathbf{F}_{\!\!q^m}/\mathbf{F}_{\!\!q}$ obtained from elliptic curves over $\mathbf{F}_{\!\!q}$.
But the existence of these efficient normal bases puts constraints
on the sizes of $m$ and $q$.
If there is no efficient normal bases
of $\mathbf{F}_{\!\!q^m}/\mathbf{F}_{\!\!q}$ for some $m$ and $q$,
one may hope that $m$ has a proper divisor $n$ 
such that $\mathbf{F}_{\!\!q^n}/\mathbf{F}_{\!\!q}$
admits an efficient normal basis $\mathcal{N} =(\alpha,\alpha^q,
\ldots,\alpha^{q^{n-1}})$. 
Set $m=nd$. Then any basis $B=(\beta_j)_{0\le j\le d-1}$ 
of $\mathbf{F}_{\!\!q^{m}}/\mathbf{F}_{\!\!q^n}$ obviously induces a basis
$\Theta=(\alpha^{q^i}\beta_j)_{i,j}$ of $\mathbf{F}_{\!\!q^{m}}/\mathbf{F}_{\!\!q}$.
This is not a normal basis,
since the $q$-Frobenius automorphism does not act on $B$.
We call such a basis as $\Theta$
an {\it extension of $\mathcal{N}$ with degree $d$}.
In this paper, we construct bases of $\mathbf{F}_{\!\!q^{m}}/\mathbf{F}_{\!\!q}$
by extending normal bases of $\mathbf{F}_{\!\!q^{n}}/\mathbf{F}_{\!\!q}$
and we show that arithmetic operations in $\mathbf{F}_{\!\!q^{m}}$
may be efficiently computed (at least in some cases) by using these extended bases.
Let us recall one of the basics of complexity theory in our context.
Assume that $\Gamma$ is a straight-line program which computes
the coordinates of the product
$x \times y$ in an arbitrary basis $\mathcal{B}$ of $\mathbf{F}_{\!\!q^{m}}/\mathbf{F}_{\!\!q}$
from the ones of $x$ and $y$ by using additions, subtractions,
multiplications of a register by a constant, and additions, subtractions,
multiplications between two registers. Then the  {\it  complexity } of 
$\Gamma$ is the total number of such operations.
The complexity of $\mathcal{B}$ is defined to be 
the minimal possible complexity of a straight-line 
program computing the coordinates of $x \times y$ from the ones of $x$ and $y$.
In addition, we introduce the following terminology.

\begin{definition}
Let $\mathcal{N} =(\alpha^{q^{i}})_{0\le i\le n-1}$ be a normal basis of 
$\mathbf{F}_{\!\!q^{n}}/\mathbf{F}_{\!\!q}$ (this means that $\mathcal{N}$
 is a basis of $\mathbf{F}_{\!\!q^{n}}/\mathbf{F}_{\!\!q}$
generated by the normal element $\alpha$).
\begin{enumerate}[1.]
\item The multiplication table
of $\mathcal{N}$ is defined to be
the matrix \ $T=(t_{i,j})_{0\le i,j\le n-1}$ given by
\begin{equation}\label{eq:7}
\alpha\alpha^{q^i}=\sum_{j=0}^{n-1}t_{i,j}\alpha^{q^j}, \ i=0,1\ldots, n-1.
\end{equation}
\item The weight of $\mathcal{N}$, denoted by $w(\mathcal{N})$, is defined to be
the total number of non-zero entries $t_{i,j}$. 

\item The density of
$\mathcal{N}$, denoted by $d(\mathcal{N})$, is equal to $n \times
w(\mathcal{N})$.

\item For $i,j,k,l\in \{0,\ldots, n-1 \}$,
the products $t_{i,j}t_{k,l}$ are called the cross-products
of the multiplication table of $\mathcal{N}$.

\item Let $\mathcal{B}=(b_i)_{0\le i\le m-1}$
be an arbitrary basis of $\mathbf{F}_{\!\!q^{m}}/\mathbf{F}_{\!\!q}$.
For $ i,j\in \{0,1\ldots, m-1\}$, set
\begin{equation}\label{eq:4}
b_i b_j=\sum_{k=0}^{m-1}t_{i,j}^kb_k.
\end{equation}\begin{enumerate}[(i)]
\item The {\it multiplication tables}
of  $\mathcal{B}$
are defined to be the matrices $(T_0,T_1,\ldots,T_{m-1})$, where
\begin{equation}\label{eq:6}
T_k=(t_{i,j}^k)_{0\le i,j\le m-1}
\end{equation} is defined from equation $(\ref{eq:4})$.
 
\item  The {\it density} of $\mathcal{B}$, denoted by $d(\mathcal{B})$,
is defined to be the total number of non-zero entries $t_{i,j}^k$ for 
$i,j,k\in \{0,\ldots, m-1\}$.
\end{enumerate}
\end{enumerate}
\end{definition}

Addition and substraction of
two elements
$$X=\sum_{0\le k\le m-1}x_kb_k \ \text{ and } \ Y=\sum_{0\le k\le m-1}y_kb_k
\ \text{ in } \ \mathbf{F}_{\!\!q^{m}}$$
expressed in an arbitrary basis $\mathcal{B}=(b_k)_{0\le k\le m-1}$ of
$\mathbf{F}_{\!\!q^{m}}/\mathbf{F}_{\!\!q}$
are performed componentwise and easy to implement.
But multiplication may be more difficult. Set $Z=X\times Y$, and denote by
$\sum_{0\le k\le m-1} z_kb_k$ the decomposition of $Z$ in 
$\mathcal{B}$. The coefficients $z_k$ are obtained from
the multiplication tables $(T_k)_{0\le k\le m-1}$ of $\mathcal{B}$ as follows:
\begin{equation}\label{eq:17}
z_k=XT_k \! \! \ ^tY.
\end{equation}
So the number of operations required to implement multiplication
in $\mathbf{F}_{\!\!q^{m}}$ from the multiplication tables of $\mathcal{B}$
depends on the density of $\mathcal{B}$.
This means that normal bases having low weight have good complexity.
On the other hand, there are quasi-linear time algorithms (for instance the 
one described in \cite{Couveignes-Lercier2})
which output the
coordinates of $X\times Y$ in a normal basis  $\mathcal{N}$
from the ones of
$X$ and $Y$ without using the multiplication table of $\mathcal{N}$.
But if the known normal bases of $\mathbf{F}_{\!\!q^{m}}/\mathbf{F}_{\!\!q}$
have bad complexity, one may turn to
extensions of normal bases of intermediate fields. This means that we first look for
suitable subfields $\mathbf{K}$ of $\mathbf{F}_{\!\!q^{m}}$
containing $\mathbf{F}_{\!\!q}$ such that there
exists an efficient normal basis $\mathcal{N}$ of $\mathbf{K}/\mathbf{F}_{\!\!q}$,
and then we extend $\mathcal{N}$ to a basis of $\mathbf{F}_{\!\!q^{m}}/\mathbf{F}_{\!\!q}$.
In \cite{Thomson-Weir} the authors constructed
extended bases in characteristic $2$ by using Artin-Schreier theory.
So they focused on the case when the degree is equal to $2$.
In the present paper we construct
extended bases whose degree is
equal to $3$ and $4$ 
by using  Kummer theory and Artin-Schreier-Witt theory.
We also give conditions under which
Thomson-Weir construction can be combined with ours.
When the original normal basis $\mathcal{N}$ has subquadratic weight
and subquadratic complexity,
we show that all the resulting extended bases have subquadratic complexity.

\subsection*{Plan}

In Section 2 we present quadratic
Artin-Schreier extended bases and degree $4$ Artin-Schreier-Witt extended bases.
In Section 3 we describe degree $3$ Kummer extended bases.
Section 4 is devoted to extended bases in the context of
towers of field extensions obtained from Artin-Schreier and Kummer theories.

\subsection*{Notation:} 
Throughout this paper $\bK$ denotes a field with characteristic $p>0$,
and $\overline{\bK}$ is an algebraic closure of $\bK$.

\section{Extended bases whose degree is a power of $2$}

In this section we recall general results concerning cyclic
extensions of $\bK$ whose degree is a $p$-power,
and we specify the case when $\bK$ is a finite field with characteristic $2$.

\subsection{Artin-Schreier extended bases in characteristic $2$}\label{sect:1}

It is proved in [\cite{Lang}, Chapter VI, Theorem 6.4] that any degree $p$ cyclic extension
of $\bK$ is generated by a root of a polynomial of the form
$$X^p-X-\alpha,$$
where $\alpha\in \bK$ lies outside of the set $\{x^p-x \ \vert  x\in \bK \}$.
Irreducible polynomials of this type are useful both
for constructing efficient normal bases and for extending them.
For instance in [\cite{Ezome-Sall1}, Theorem 1] the authors constructed a normal basis
of $\bK[X]/(X^p-X-\alpha)$ over $\bK$ with low weight and quasi-linear complexity.
On the other hand, \textit{degree $p$ Artin-Schreier extended bases} defined below
are constructed from irreducible polynomials of the form $X^p-X-\alpha$.

\begin{definition}
Let $p$ be a prime number and $q$ a power of $p$.
Let $\mathcal{N} =(\alpha^{q^{i}})_{0\le i\le n-1}$ be a normal basis of 
$\mathbf{F}_{\!\!q^{n}}/\mathbf{F}_{\!\!q}$. Denote by $\overline{\bF}_{\!\!q}$
an algebraic closure of $\bF_{\!\!q}$ containing $\mathbf{F}_{\!\!q^{n}}$.
A degree $p$ Artin-Schreier extension
of $\mathcal{N}$ (also Artin-Schreier extended basis)
is a basis $\mathcal{A}$  of  $\mathbf{F}_{\!\!q^{np}}/\mathbf{F}_{\!\!q}$ 
for which there exists $\beta$ in $\overline{\bF}_{\!\!q}$ outside of $\mathbf{F}_{\!\!q^n}$
such that $\beta^p-\beta=\alpha$ and $\mathcal{A} = (\alpha^{q^i}\beta^j)_{i,j} $.
\end{definition}

It is shown that
any normal basis $\mathcal{N}=(\alpha, \alpha^{2}, \ldots,\alpha^{2^{n-1}})$
of $\bF_{\!2^{n}}/\bF_{\!2}$ admits
an Artin-Schreier extension. Indeed,
assume that the polynomial $f(X)=X^2+X+\alpha$ is reducible over $\bF_{\!2^n}$.
Then the additive form of
Hilbert's Theorem 90 ensures that
$\mathrm{Tr}_{\bF_{\!2^n}/\bF_{\!2}}(\alpha)=0$.
But this is impossible since $\alpha$ is a normal element of $\bF_{\!2^n}/\bF_{\!2}$.
Hence any $\beta$  in $\bF_{\!2^{2n}}$ satisfying
$$\beta^2+\beta=\alpha$$
defines a quadratic Artin-Schreier extension
$\mathcal{A}=\mathcal{N}\cup \beta\mathcal{N}$ of $\mathcal{N}$.
The following statement describes squaring in $\bF_{\!2^{2n}}$, it also
gives the complexity and density of  $\mathcal{A}$.

\begin{prop}\label{prop:1}
Let $\mathcal{N}=(\alpha, \alpha^{2}, \ldots,\alpha^{2^{n-1}})$
be a normal basis of $\bF_{\!2^{n}}/\bF_{\!2}$
and $\beta$ an element  in $\bF_{\!2^{2n}}$ such that
 $\mathcal{A}=\mathcal{N}\cup \beta\mathcal{N}$ is
a degree $2$ Artin-Schreier extension of $\mathcal{N}$.
\begin{enumerate}[1.]
\item Squaring in $\bF_{\!2^{2n}}$ is given by
$$(C+\beta D)^2 = (C_{>}+E)+\beta D_{>},$$
where $C_{>}$ and  $D_{>}$ stand for right-cyclic shifts of the coordinate vectors
of \ $C$ and $D$, and
 $$E = \  ^t\!D_{>} \times T$$
is a vector-matrix multiplication between the transpose of $D_{>} $
and the multiplication table of $\mathcal{N}$.
\item The complexity of $\mathcal{A}$ consists in at most:
\begin{enumerate}
\item 3 multiplications and 4 additions between elements
lying in $\bF_{\!2^{n}}$;
\item 1 vector-matrix multiplication between a vector of $\bF_{\!2^n}/\bF_{\!2}$
and the multiplication table of $\mathcal{N}$.
\end{enumerate}

\item If $\mathcal{N}$ has subquadratic complexity
and subquadratic weight, then
$\mathcal{A}$ has also subquadratic complexity.

\item Let $w(\mathcal{N})$ be the weight of $\mathcal{N}$.
For $(i,\delta)\in \{0,\ldots,n-1 \} \times \{0,1 \}$,
set $\mathfrak{a}_{i+\delta n}=\alpha^{2^i}\beta^\delta$ so that
$\mathcal{A}=(\mathfrak{a}_{k})_{0\le k\le 2n-1}$.
\begin{enumerate}
\item For $0\le k\le n-1$, the number of non-zero entries in the
 $k$-th multiplication table of $\ \mathcal{A}$, is equal to
$$
w(\mathcal{N})+ \sum_{0\le i,j\le n-1}
\varphi(\sum_{r=0}^{n-1}t_{j-i,r-i}t_{r,k}),$$
where subscripts are taken modulo $n$,
$(t_{i,j})_{0\le i,j\le n-1}$ is the multiplication table
of $\mathcal{N}$, and 
$\varphi$ is the unique ring homomorphism
from $\bF_{\!2}$ into  $\mathbb{Z}$.
\item The $k$-th multiplication table of \ $\mathcal{A}$, for $n\le k\le 2n-1$,
 has $3w(\mathcal{N})$ non-zero entries.
\end{enumerate}
\end{enumerate}
The density of $\mathcal{A}$ is given by
$$d(\mathcal{A})=4d(\mathcal{N})
+ \sum_{0\le k\le n-1}\sum_{0\le i,j\le n-1}\varphi(\sum_{0\le r\le n-1} t_{j-i,r-i}t_{r,k}).$$
\end{prop}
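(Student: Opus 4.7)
The plan is to proceed through the four parts sequentially, each reducing to a direct computation that leverages two structural facts: the normal-basis identity $(\alpha^{2^i})^2 = \alpha^{2^{i+1}}$ (i.e.\ Frobenius acts as a cyclic shift on coordinates) and the Artin--Schreier relation $\beta^2 = \beta + \alpha$.

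\textbf{Step 1 (squaring).} I would write an arbitrary element of $\bF_{2^{2n}}$ as $C + \beta D$ with $C, D \in \bF_{2^n}$ expressed in $\mathcal{N}$, and compute
\begin{equation*}
(C+\beta D)^2 = C^2 + \beta^2 D^2 = C^2 + \alpha D^2 + \beta D^2.
\end{equation*}
In the normal basis, squaring corresponds to the right-cyclic shift, so $C^2 = C_{>}$ and $D^2 = D_{>}$. It remains to identify $E := \alpha D_{>}$ as a vector-matrix product. Writing $D_{>} = \sum_i d'_i \alpha^{2^i}$ and using $\alpha \alpha^{2^i} = \sum_j t_{i,j}\alpha^{2^j}$, one reads off that the $j$-th coordinate of $E$ is $\sum_i d'_i t_{i,j}$, i.e.\ $E = {}^{t}\!D_{>} \times T$.

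\textbf{Step 2 (complexity and subquadraticity).} For general multiplication I would apply the Karatsuba identity in $\bF_{2^n}$ to $(C_1+\beta D_1)(C_2+\beta D_2)$: using $\beta^2 = \beta + \alpha$, the result reduces to
\begin{equation*}
\bigl(C_1C_2 + \alpha D_1 D_2\bigr) + \beta\bigl((C_1+D_1)(C_2+D_2) + C_1C_2\bigr),
\end{equation*}
which needs exactly the three products $C_1C_2$, $D_1D_2$, $(C_1+D_1)(C_2+D_2)$, four additions in $\bF_{2^n}$, and one vector-matrix product $\alpha D_1 D_2$ of the same shape as in Step 1. Part 3 then follows: the three products cost $3M(n)$ where $M(n) = o(n^2)$ by hypothesis, the additions cost $O(n)$, and the vector-matrix product costs $O(w(\mathcal{N})) = o(n^2)$, so the total is $o((2n)^2)$.

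\textbf{Step 3 (entries of the multiplication tables).} Here I would do a block case analysis on the pair $(i,j)$ relative to the splitting $\mathfrak{a}_{i+\delta n} = \alpha^{2^i}\beta^\delta$. The identity $\alpha^{2^i}\alpha^{2^j} = (\alpha \alpha^{2^{j-i}})^{2^i} = \sum_r t_{j-i, r-i}\alpha^{2^r}$ (indices modulo $n$) handles Case $\delta_i=\delta_j=0$. Case $\delta_i+\delta_j=1$ multiplies the previous by $\beta$, sending each $\alpha^{2^r}$ to $\mathfrak{a}_{r+n}$. Case $\delta_i=\delta_j=1$ uses $\beta^2=\beta+\alpha$, producing a $\beta$-contribution identical to Case~2 and an $\alpha$-contribution $\alpha \cdot \alpha^{2^{i}}\alpha^{2^j} = \sum_k\bigl(\sum_r t_{j-i,r-i}t_{r,k}\bigr)\alpha^{2^k}$. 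Reading off $t_{i,j}^k$ block by block and using the bijection $(i,j)\mapsto (j-i,k-i)$ on $\{0,\ldots,n-1\}^2$ (for each fixed $k$), each ``shifted'' block contributes exactly $w(\mathcal{N})$ nonzero entries. Assembling: $T_k$ for $k<n$ has one shifted block plus the $\bF_2$-sum terms (whence $\varphi$ appears to count nonzeros), giving the stated count; $T_k$ for $k\ge n$ has three shifted blocks of weight $w(\mathcal{N})$ each.

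\textbf{Step 4 (density).} Summing the per-table counts: $\sum_{k=0}^{n-1} w(\mathcal{N}) + 3n\,w(\mathcal{N}) = 4n\,w(\mathcal{N}) = 4 d(\mathcal{N})$, plus the remaining double sum, yields the claimed formula for $d(\mathcal{A})$.

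The main obstacle is Step 3: keeping the index bookkeeping consistent (subscripts modulo $n$, the role of $\delta$, the re-indexing $k = k'+n$) and justifying rigorously that the map $(i,j)\mapsto(j-i,k-i)$ is a bijection so that the ``shifted'' blocks really contribute $w(\mathcal{N})$ nonzeros each. Everything else is mechanical once that combinatorial reindexing is in place.
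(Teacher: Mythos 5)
Your proposal is correct and follows essentially the same route as the paper's proof: squaring via the Frobenius shift and $\beta^2=\beta+\alpha$, the Karatsuba identity yielding $3$ products, $4$ additions and one vector--matrix product $\alpha D_1D_2$, and the block decomposition of the tables $T_k$ with the reindexing $(i,j)\mapsto(j-i,k-i)$ giving $w(\mathcal{N})$ nonzeros per shifted block and the $\varphi$-sums for the $\alpha$-contributions. The density computation $d(\mathcal{N})+3d(\mathcal{N})$ plus the cross-product sum also matches the paper.
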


\begin{proof}
\begin{enumerate}[1.]
\item This is [\cite{Thomson-Weir}, Proposition 3.7].
 Let $C=\sum_{i=0}^{n-1}c_i\alpha^{2^i}$ and 
$D=\sum_{i=0}^{n-1}d_i\alpha^{2^i}$
 be the linear combinations of $C$ and $D$
with respect to $\mathcal{N}$. We have
$$
\begin{array}{rl}
(C+\beta D)^2 = & \sum_{i=0}^{n-1}c_{i-1}\alpha^{2^i} + \beta^2\sum_{i=0}^{n-1}d_{i-1}\alpha^{2^i}\\
= & \sum_{i=0}^{n-1}c_{i-1}\alpha^{2^i} + (\beta+\alpha)\sum_{i=0}^{n-1}d_{i-1}\alpha^{2^i}\\
= & \Big(\sum_{i=0}^{n-1}c_{i-1}\alpha^{2^i} +\sum_{i=0}^{n-1}d_{i-1}\alpha\alpha^{2^i} \Big)
+\beta \sum_{i=0}^{n-1}d_{i-1}\alpha^{2^i}.
\end{array}
$$
So
$$(C+\beta D)^2 = \Big(\sum_{i=0}^{n-1}c_{i-1}\alpha^{2^i}+\sum_{i=0}^{n-1}d_{i-1}\sum_{k=0}^{n-1}
t_{i,k}\alpha^{2^k}\Big) + \beta \sum_{i=0}^{n-1}d_{i-1}\alpha^{2^i},$$
where subscripts are taken modulo $n$ and
 $(t_{i,k})_{i,k}$ stands for the multiplication 
table of $\mathcal{N}$.
The term $\sum_{i=0}^{n-1}d_{i-1}\sum_{k=0}^{n-1}
t_{i,k}\alpha^{2^k}$ corresponds to
a vector-matrix multiplication between the transpose of the right-cyclic
shift of the coordinate vector of $D$ and
the multiplication table of $\mathcal{N}$.
Assume that $\mathcal{N}$ has subquadratic weight in $n$. This means
that its multiplication table
is a sparse matrix with $o(n^2)$ non-zero entries. So the computation
of the above vector-matrix
multiplication needs $o(n^2)$ operations in $\bF_{\!2}$.
Since a cyclic shift of coordinates of a vector in $\bF_{\!2^{n}}$ over $\bF_{\!2}$
runs in time $O(n)$, we
conclude that squaring in $\bF_{\!2^{2n}}$ has subquadratic running time.

\item Let $C=C_0+\beta C_1$ and $D=D_0+\beta D_1$ be two elements
of $\bF_{\!2^{2n}}$ expressed in
$\mathcal{A}$. A Karatsuba-like multiplication algorithm gives
\begin{equation}\label{eq:2}
\begin{array}{rl}
C\times D = & \beta^2 C_1D_1+\beta \big( (C_1+C_0)(D_1+D_0)+C_1D_1 +C_0D_0 \big)+C_0D_0\\
= & (\beta^2+\beta)C_1D_1+\beta \big( (C_1+C_0)(D_1+D_0)+C_0D_0\big)+C_0D_0.
\end{array}
\end{equation}
Since $\beta^2+\beta=\alpha$, we have
\begin{equation}\label{eq:9}
C\times D= C_0D_0+\alpha C_1D_1+\beta \Big( (C_1+C_0)(D_1+D_0)+C_0D_0\Big).
\end{equation}
So the product $C \times D$ consists in 3 multiplications and 4 additions
between elements in $\bF_{\!2^{n}}$, and a vector-matrix 
multiplication which corresponds to the term $\alpha C_1 D_1$ in equation $(\ref{eq:9})$.

\item We described
the computation of the vector-matrix multiplication $\alpha C_1 D_1$
 in the first item of the present proof.
Thus if $\mathcal{N}$ has subquadratic weight and subquadratic complexity,
then a multiplication in $\bF_{\!2^{n}}$ needs $o(n^2)$ operations in $\bF_{\!2}$,
as well as a vector-matrix multiplication.
Since sum of two vectors in $\bF_{\!2^{n}}$ over $\bF_{\!2}$
can be computed in time $O(n)$, we
conclude that the product $C \times D$ needs $o(n^2)$ operations in $\bF_{\!2}$.

\item For $(i,\delta)\in \{0,\ldots,n-1 \} \times \{0,1 \}$,
we set $\mathfrak{a}_{i+\delta n}=\alpha^{2^i}\beta^\delta$ so that
$\mathcal{A}=(\mathfrak{a}_{k})_{0\le k\le 2n-1}$.
Hence, the multiplication tables $T_k$ of $\mathcal{A}$ are given by the block matrix
$$\left( \begin{array}{c|c}
 & \\ 
(\alpha^{2^i}\alpha^{2^j})_{0\le i,j\le n-1} & (\beta\alpha^{2^i}\alpha^{2^j})_{0\le i,j\le n-1}\\ 
 & \\ \hline
& \\ 
(\beta \alpha^{2^i}\alpha^{2^j})_{0\le i,j\le n-1} & ((\alpha+\beta)\alpha^{2^i}\alpha^{2^j})_{0\le i,j\le n-1}\\ 
 & 
\end{array} \right)
$$
\begin{enumerate}
\item For $0\le k\le n-1$, the components of the matrix
$T_k$ come from the blocks $(\alpha^{2^i}\alpha^{2^j})_{0\le i,j\le n-1}$ and 
$((\alpha+\beta)\alpha^{2^i}\alpha^{2^j})_{0\le i,j\le n-1}$. 
On the other hand, the multiplication table $T=(t_{r,s})_{0\le r,s\le n-1}$ of $\mathcal{N}$
is given by
$$
\alpha\alpha^{2^r}=\sum_{s=0}^{n-1}t_{r,s}\alpha^{2^s}, \ 0\le r\le n-1.
$$
Since
$$
\alpha^{2^i}\alpha^{2^j}=\sum_{r=0}^{n-1}t_{i,j}^r\alpha^{2^r},
$$
it follows that $t_{i,j}^r=t_{j-i,r-i}$. So
\begin{equation}\label{eq:8}
\alpha \alpha^{2^i}\alpha^{2^j}=\alpha\sum_{r=0}^{n-1}t_{j-i,r-i}\alpha^{2^r}=
\sum_{r=0}^{n-1}t_{j-i,r-i}\sum_{k=0}^{n-1}t_{r,k}\alpha^{2^k},
\end{equation}
where subscripts are taken modulo $n$.
The number of non-zero entries  in the matrix $T_k$ coming from
$\alpha \alpha^{2^i}\alpha^{2^j}$ is given by the coefficient
of $\alpha^{2^k}$ in the linear combination $(\ref{eq:8})$.
This number is equal to 
$$\varphi(\sum_{r=0}^{n-1} t_{j-i,r-i}t_{r,k}),$$
where $\varphi$ is the unique ring homomorphism
from $\bF_{\!2}$ into  $\mathbb{Z}$.
Hence the total number of non-zero entries in $T_k$ is equal to
$$w(\mathcal{N})+ \sum_{0\le i,j\le n-1}
\varphi(\sum_{r=0}^{n-1}t_{j-i,l-i}t_{r,k}).$$

\item For $n\le k\le 2n-1$, the components of
$T_k$ come from the blocks $(\beta\alpha^{2^i}\alpha^{2^j})_{0\le i,j\le n-1}$
and $((\beta+\alpha) \alpha^{2^i}\alpha^{2^j})_{0\le i,j\le n-1}$.
So the number of non-zero entries in $T_k$ is equal to
$3w(\mathcal{N}).$
\end{enumerate}
\end{enumerate}
\end{proof}

\subsection{Background on Witt vectors}\label{sect:4}

Witt described in \cite{Witt} cyclic field extensions 
whose degree is a power of the characteristic of the base field. 
Theorem \ref{thm:2} below is  one of the main results of \cite{Witt}.
Let $A$ be a commutative ring with unit element, and $S$ a (possibly infinite)
subset of the natural numbers $\mathbb{N}$.
The structure of commutative ring with unit on the cartesian product $A^{S}$
is easily verified, addition and multiplication are performed componentwise.
There may be other ring structures on $A^S$, for instance the one from the theory
of Witt vectors (see \cite{Serre-Corps-Locaux} or \cite{Lara-PhD}).
We let $p$ be a prime number and $\phi$ the unique ring-homomorphism
from the integers into $A$.
For any $n$ in $\mathbb{Z}$,  we write again $n$ instead of $\phi(n)$.
We start with the assumption that $p$ is invertible in $A$.
Then the set of Witt vectors with components in $A$ denoted by
$W(A)$ is the set of sequences $\mathbf{x}=
(x_k)_{k\in \mathbb{N}}$ of elements of
$A$ which admit {\it  sequences of
ghost components} $(x^{(k)})_{k\in \mathbb{N}}$ defined by
\begin{equation}\label{eq:13}
x^{(k)}:=x_0^{p^k}+ px_1^{p^{k-1}}+\ldots+p^kx_k,
\end{equation}
On the other hand, it is easily seen that
\begin{equation}\label{eq:16}
x_0=x^{(0)}, \ \  x_1=\frac{1}{p}\Big(x^{(1)}-x_0^{p}\Big) \  \ \text{ and } \ \
x_k=\frac{1}{p^k}\Big( x^{(k)}-\sum_{0 \le d \le k-1} p^dx_d^{p^{k-d}}\Big)
 \ \text{ for any } \ k\ge 1.
\end{equation}
So components of a Witt vector are recursively computed from
its ghost components and vice versa.
We deduce that the map
$$
\xymatrix{
\varphi : W(A) & \ar@{->}[r] & 
A^{\mathbb{N}}\\
\mathbf{x}=(x_k)_{k\in \mathbb{N}} & \ar@{|-{>}}[r] & (x^{(k)})_{k\in \mathbb{N}}
}$$
is a bijection. From the structure of product ring 
on $A^{\mathbb{N}}$, we obtain
a structure of commutative ring with unit on $W(A)$
whose composition laws are given by
$$
\mathbf{x}+\mathbf{y}=\varphi^{-1}((x^{(k)})_k+(y^{(k)})_k) \quad \text{ and } \quad
\mathbf{x} \times \mathbf{y}=\varphi^{-1}((x^{(k)})_k\times(y^{(k)})_k).
$$
Actually the components of the sum and product
of two Witt vectors $\mathbf{x}$ and $\mathbf{y}$
may be computed from polynomial equations involving the components of
$\mathbf{x}$ and $\mathbf{y}$,  for a more detailed exposition of this fact
see \cite{Witt}, \cite{Serre-Corps-Locaux} or \cite{Lara-PhD}.
It is shown that there exists a unique sequence 
$S_0, S_1, \ldots, S_n, \ldots$ of polynomials in 
$\mathbb{Z}[X_0,X_1, \ldots, X_n, \ldots; Y_0,Y_1, \ldots, Y_n, \ldots, ]$
 (resp. a unique sequence
$P_0, P_1,\ldots, P_n,\ldots)$
so that for $\mathbf{x}$ and $\mathbf{y}$  in $W(A)$ we have
\begin{equation}\label{eq:15}
(\mathbf{x}+ \mathbf{y})_k=S_k(\mathbf{x},\mathbf{y})
\ \ \text{ and } \ \ (\mathbf{x}\times \mathbf{y})_k=P_k(\mathbf{x},\mathbf{y}).
\end{equation}
In both cases, the first two polynomials $S_0$ and $S_1 ($ resp.  $P_0$ and $P_1)$
can be easily computed:
\begin{equation}\label{eq:14}
\begin{array}{l}
S_0(\mathbf{x}, \mathbf{y})=x_0+y_0, \quad S_1(\mathbf{x}, \mathbf{y})
=x_1+y_1+\frac{1}{p}\sum_{k=1}^{p-1}
\Big( \begin{array}{l}
p\\
k \end{array} \Big) x_0^{k}y_0^{p-k},\\
P_0(\mathbf{x}, \mathbf{y})=x_0y_0, \quad P_1(\mathbf{x}, \mathbf{y})
= x_1y_0^p+y_1x_0^p+px_1y_1.
\end{array}
\end{equation}
In case $A$ is an arbitrary commutative ring with unit (even if $p$ is not invertible),
it is shown that $W(A)$ is also a commutative ring with unit, the laws being
defined from the polynomials $S_k$ and  $P_k$ in equation $(\ref{eq:15})$
(see [\cite{Serre-Corps-Locaux}, Chapter II,  \S 6]
 or [\cite{Lara-PhD}, Section 1.1]).
 Since the polynomials $S_k$ and  $P_k$ only involve variables $X_k$ and $Y_k$ whose
 index are $\le k$, we deduce that for any positive integer
 $r$ the set $W_{\!r}(A)$ of truncated Witt vectors
 $(x_0,x_1,\ldots, x_{r-1})$ with length $r$ and components
 in $A$ form a commutative ring with unit.
In case $A$ is a field with characteristic
$p$, the group-homomorphism 
$$\begin{array}{llll}
\wp: & A & \longrightarrow & A\\
 & x & \mapsto & x^p-x.
\end{array}
$$
induces a group-homomorphism from $W_{\!r}(A)$
into itself that we call $\wp$ also.
The following theorem generalizes Artin-Schreier Theorem.

\begin{theorem}\label{thm:2}
Let $K$ be a field with characteristic $p>0$, and $r\ge 1$ an integer.
\begin{enumerate}[1.]
\item Let $x=(x_0,x_1,\ldots,x_{r-1})$ be a truncated Witt vector with components in $K$.
\begin{enumerate}
\item The equation 
\begin{equation}\label{eq:5}
\wp(\xi)=x
\end{equation}

either has no root 
in $W_{\!r}(K)$, or it has a root in $W_{\!r}(K)$. In the later case, all its 
$p^r$ roots lie in $W_{\!r}(K)$.

\item If equation $(\ref{eq:5})$ has no root in $W_r(K)$,
then $K(\wp^{-1}(x))$ is a cyclic extension of $K$ with degree
dividing $p^r$.
The degree $[K(\wp^{-1}(x)) : K]$ is equal to $p^r$ if and only if
$x_0\notin \wp(K)$.
\end{enumerate}

\item If $L/K$ is a cyclic extension with degree $p^r$,
then there exists $x$ in $W_{\!r}(K)$ such that $L=K(\wp^{-1}(x))$ 
and $x_0 \notin \wp(K)$.
\end{enumerate}
\end{theorem}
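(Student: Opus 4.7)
The plan is to first pin down the kernel of $\wp$ on $W_{\!r}(\overline{K})$. Solving $\wp(\zeta)=0$ componentwise via the polynomial formulas for Witt addition and Frobenius in characteristic $p$, the equation forces $\zeta_0\in\bF_p$, and each subsequent $\zeta_k$ is a root of a separable polynomial of the form $T^p-T-\Phi_k(\zeta_0,\ldots,\zeta_{k-1})$ with $\Phi_k$ a polynomial over the prime field. Counting, the kernel has at most $p^r$ elements; on the other hand $W_{\!r}(\bF_p)\hookrightarrow W_{\!r}(\overline{K})$ is a subring of cardinality $p^r$ contained in $\ker\wp$, so $\ker\wp=W_{\!r}(\bF_p)\cong\ZZ/p^r\ZZ$. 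Since $W_{\!r}(\bF_p)\subseteq W_{\!r}(K)$, part 1(a) is immediate: if $\xi\in W_{\!r}(K)$ is one root of $\wp(\xi)=x$, then $\xi+W_{\!r}(\bF_p)$ supplies all $p^r$ roots inside $W_{\!r}(K)$.

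For 1(b), the same componentwise argument yields a root $\xi$ in $W_{\!r}(\overline{K})$ because each Artin-Schreier equation is solvable in $\overline{K}$. Let $L=K(\xi)$ (the subfield of $\overline{K}$ generated by the components of $\xi$), which is a normal, separable, hence Galois extension of $K$. For any $\sigma\in\Gal(L/K)$, applying $\sigma$ to $\wp(\xi)=x$ and using $\sigma x = x$ gives $\wp(\sigma\xi-\xi)=0$, so $\sigma\xi-\xi\in W_{\!r}(\bF_p)\cong\ZZ/p^r\ZZ$. The assignment $\sigma\mapsto\sigma\xi-\xi$ is an injective group homomorphism $\Gal(L/K)\hookrightarrow\ZZ/p^r\ZZ$, forcing cyclicity and divisibility of $[L:K]$ by $p^r$. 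For the degree criterion, note that $\xi_0$ satisfies the ordinary Artin-Schreier equation $\xi_0^p-\xi_0=x_0$. If $x_0\in\wp(K)$, pick $a_0\in K$ with $\wp(a_0)=x_0$ and set $\eta=(a_0,0,\ldots,0)\in W_{\!r}(K)$; then replacing $x$ by $x-\wp(\eta)$ does not change the extension $K(\wp^{-1}(x))$ but produces a defining Witt vector whose zeroth component is zero, so one checks inductively that the extension embeds into a cyclic $p^{r-1}$-extension and $[L:K]<p^r$. Conversely, when $x_0\notin\wp(K)$, the classical Artin-Schreier theorem gives $[K(\xi_0):K]=p$, and the image of $\Gal(L/K)\hookrightarrow\ZZ/p^r\ZZ$ surjects onto $\Gal(K(\xi_0)/K)\cong\ZZ/p\ZZ$ under the natural truncation; but the only subgroup of $\ZZ/p^r\ZZ$ surjecting onto $\ZZ/p\ZZ$ is the whole group, so $[L:K]=p^r$.

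For part 2, given a cyclic extension $L/K$ of degree $p^r$ with generator $\sigma\in\Gal(L/K)$, I would invoke the additive Hilbert 90 in the Witt vector setting, namely the vanishing of $H^1(\Gal(L/K),W_{\!r}(L))$, to produce $\xi\in W_{\!r}(L)$ satisfying $\sigma\xi-\xi=(1,0,\ldots,0)$, a generator of $W_{\!r}(\bF_p)$. Setting $x:=\wp(\xi)$, the identity $\sigma\wp(\xi)=\wp(\sigma\xi)=\wp(\xi+(1,0,\ldots,0))=\wp(\xi)$ (using that the translation lies in $\ker\wp$) shows $x$ is $\sigma$-invariant, hence $x\in W_{\!r}(K)$. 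Part 1(b) then identifies $[K(\wp^{-1}(x)):K]$ with the order of $\sigma\xi-\xi$ in $\ZZ/p^r\ZZ$, which is $p^r$, so $K(\wp^{-1}(x))=L$ and $x_0\notin\wp(K)$.

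The main obstacle is the degree characterization in 1(b), specifically justifying rigorously that substituting $x$ by $x-\wp(\eta)$ with $\eta=(a_0,0,\ldots,0)$ effectively reduces the problem to a Witt vector of length $r-1$ (this requires an induction on $r$ together with a careful analysis of the polynomials $S_k$ to control what $\wp(\eta)$ looks like in components beyond the zeroth). The use of additive Hilbert 90 for $W_{\!r}(L)$ in part 2 is also nonroutine but is a standard application of group cohomology that can be cited from \cite{Serre-Corps-Locaux}.
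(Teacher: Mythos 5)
The paper gives no proof of this theorem: it is the classical Artin--Schreier--Witt theorem and the authors simply cite Witt's original paper, Lang, and Lara's thesis. Your sketch reconstructs essentially the argument found in those references, and its architecture is sound: the identification $\ker\wp = W_r(\mathbf{F}_{\!p})\cong \ZZ/p^r\ZZ$ (which, incidentally, you can get more directly than by your counting argument, since $\wp(\zeta)=0$ means $F\zeta=\zeta$ componentwise, i.e.\ $\zeta_k^p=\zeta_k$ for every $k$), the injection $\sigma\mapsto\sigma\xi-\xi$ of $\Gal(K(\xi)/K)$ into $\ZZ/p^r\ZZ$ for parts 1(a)--(b), the surjection onto $\Gal(K(\xi_0)/K)\cong\ZZ/p\ZZ$ when $x_0\notin\wp(K)$, and additive Hilbert 90 for $W_r(L)$ in part 2 are exactly the standard steps. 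One wording slip: an injective homomorphism into $\ZZ/p^r\ZZ$ shows that $[L:K]$ divides $p^r$, not that $p^r$ divides $[L:K]$.

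The one place where your argument is genuinely incomplete is the one you flag yourself: showing that $x_0\in\wp(K)$ forces $[K(\wp^{-1}(x)):K]<p^r$. You do not need a ``careful analysis of the polynomials $S_k$'' here; the clean device is the Verschiebung $V:W_{r-1}(K)\to W_r(K)$, $(y_0,\ldots,y_{r-2})\mapsto(0,y_0,\ldots,y_{r-2})$, which is additive and satisfies $FV=VF$, hence commutes with $\wp$. After translating by $\wp(\eta)$ with $\eta=(a_0,0,\ldots,0)$ as you propose, the new right-hand side has zeroth component $0$, so it equals $Vy$ for some $y\in W_{r-1}(K)$; if $\xi'\in W_{r-1}(\overline{K})$ solves $\wp(\xi')=y$, then $V\xi'$ solves the length-$r$ equation and generates over $K$ the same field as $\xi'$, whose degree divides $p^{r-1}$ by induction on $r$. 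With that step inserted (and the routine verification that $K(\xi)/K$ is normal, because every conjugate $\xi+c$, $c\in W_r(\mathbf{F}_{\!p})$, has components polynomial in those of $\xi$), your proof is complete and agrees with the one in the cited sources.
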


\begin{proof}
See \cite{Witt}, [\cite{Lang}, Page 331] or [\cite{Lara-PhD}, Section 2.1.1].
\end{proof}

\subsection{Artin-Schreier-Witt extended bases in characteristic $2$}\label{sect:4}

We first introduce the following terminology.

\begin{definition}
Let $p$ be a prime number and $q$ a power of $p$.
Let $\mathcal{N} =(\alpha^{q^{i}})_{0\le i\le n-1}$ be a normal basis of 
$\mathbf{F}_{\!\!q^{n}}/\mathbf{F}_{\!\!q}$. Denote by $\overline{\bF}_{\!\!q}$
an algebraic closure of $\bF_{\!\!q}$ containing $\mathbf{F}_{\!\!q^{n}}$.
Let $(\beta_1, \ldots, \beta_{r}) \in W_r(\overline{\mathbf{F}_{\!\!q}})$
be a truncated Witt vector outside of $W_r(\mathbf{F}_{\!\!q^n})$
such that
$$(\beta_1^p, \ldots, \beta_{r}^p)-(\beta_1, \ldots, \beta_{r})
=(\alpha,x_1 \ldots, x_{r-1})$$
where $x_1, \ldots, x_{r-1}$ are arbitrary elements in $\mathbf{F}_{\!\!q^n}$.
Set 
$\mathcal{W}_1=\mathcal{N}\cup\beta_1\mathcal{N}\cup \ldots \cup \beta_1^{p-1}
\mathcal{N}$ and
$$\mathcal{W}_i=\mathcal{W}_{i-1}\cup\beta_i\mathcal{W}_{i-1} \cup
\ldots  \cup \beta_i^{p-1} \mathcal{W}_{i-1}, \ \text{for any }  i\in \{2,\ldots, r \}$$
so that $\mathcal{W}_i$ is a basis of $\mathbf{F}_{\!\!q^{np^{i}}}/\mathbf{F}_{\!\!q^{n}}.$
Such a basis $\mathcal{W}_i$ is called a degree $p^i$ Artin-Schreier-Witt
extension of $\mathcal{N}$ (also Artin-Schreier-Witt extended basis).
\end{definition}

In this section, we focus on the case when the lenght of the truncated Witt vectors and
the characteristic of the base field are equal to $2$.
Given two truncated Witt vectors $\mathbf{x}=(x_0,x_1)$ and $\mathbf{y}
=(y_0,y_1)$ in $W_{\!2}(\overline{\bF}_{\!2^n})$,
we know from equation $(\ref{eq:14})$ that
\begin{equation}\label{eq:11}
\mathbf{x}+\mathbf{y}=(x_0+y_0,x_1+y_1+x_0y_0).
\end{equation}
Theorem \ref{thm:2} tells us that  constructing degree $4$
Artin-Schreier-Witt extensions is related to solving equations
of the form $\wp(\xi)=x$ in $W_{\!2}(\overline{\bF}_{\!2^n})$.
Let $\mathcal{N}=(\alpha, \alpha^{2}, \ldots,\alpha^{2^{n-1}})$ be
a normal basis of $\bF_{\!2^n}/\bF_{\!2}$.
Assume that $(\beta_0, \beta_1)$ is a truncated Witt vector
in $W_{\!2}(\overline{\bF}_{\!2^n})$ such that\\

\noindent $
(14) \qquad \qquad \qquad  \qquad  \qquad  \quad
(\beta_0^2, \beta_1^2)+(\beta_0,\beta_1)=(\alpha,\alpha).
$\\

\noindent Set $(s_0,s_1)=(\beta_0^2, \beta_1^2)+(\beta_0,\beta_1)$. 
From equation $(\ref{eq:11})$, we obtain
$$s_0=\beta_0^2+\beta_0 \ 
\text{ and } s_1=\beta_1^2+\beta_1+\beta_0^3.$$
By setting $(s_0', s_1')=(s_0, s_1)+(\alpha, \alpha)$,
we find
$$s_0'=\beta_0^2+\beta_0+\alpha \ \text{ and } \
s_1'=\beta_1^2+\beta_1+\beta_0^3+\alpha+\alpha\beta_0^2+\alpha\beta_0.$$
Hence equation $(\ref{eq:12})$ yields
$$\beta_0^2=\beta_0+ \alpha \ \text{ and } \
\beta_1^2= \beta_1+\beta_0(1+\alpha)+\alpha^2.$$

Squaring in $\mathbf{F}_{\!\!2^{4n}}$
and the complexity of a degree $4$ Artin-Schreier-Witt extension of
$\mathcal{N}$ are described in the following statement.

\begin{prop}\label{prop:4}
\begin{enumerate}[1.]
 Let $p$ be a prime number and $q$ a $p$-power. 
Let $\mathcal{N}=(\alpha, \alpha^{p}, \ldots,\alpha^{p^{n-1}})$
be a normal basis of $\bF_{\!p^n}/\bF_{\!p}$.
\item $\mathcal{N}$ admits an Artin-Schreier-Witt extension with degree $q$.
\item Assume $p=2$ and $q=4$. 
Denote by $\overline{\mathbf{F}}_{\!\!2^n}$
an algebraic closure of $\mathbf{F}_{\!\!2}$ containing $\mathbf{F}_{\!\!2^n}$.
Let $(\beta_0, \beta_{1})$ be a truncated Witt vector in
 $W_2(\overline{\bF}_2)$ outside of $W_2(\mathbf{F}_{\!\!2^n})$ and
such that
\begin{equation}\label{eq:12}
(\beta_0^2, \beta_1^2)+(\beta_0,\beta_1)=(\alpha,\alpha).
\end{equation}
Denote by $\mathcal{W}=(\mathcal{N}\cup \beta_0
\mathcal{N})\cup \beta_1(\mathcal{N}\cup \beta_0\mathcal{N})$
the corresponding degree $4$ Artin-Schreier-Witt extension  of $\mathcal{N}$. 
\begin{enumerate}[(a)]
\item If $\gamma= A+ \beta_0 B+ \beta_1(C +\beta_0 D)$ is an element
of $\bF_{\!2^{4n}}$ expressed in
$\mathcal{W}$, then
$$\gamma^2=  \Bigg[ A_{>} + \alpha B_{>} + \alpha^2 C_{>}
$$
$$
 + \ \  (\alpha^3+\alpha^2+\alpha) D_{>} +  \beta_0\big( B_{>} +
 (1+\alpha) C_{>}+ D_{>} \big) \Bigg] \ \ + \ \
\beta_1 \Bigg[ C_{>} + \alpha D_{>}+\beta_0 D_{>} \Bigg],$$
where $A_{>}, B_{>}, C_{>}$ and $D_{>}$ stand for right-cyclic
shifts of the coordinate vectors of $A, B, C$ and $D$.
\item The complexity of $\mathcal{W}$ consists in at most:
\begin{itemize}
\item 9 multiplications and 33 additions between elements
lying in $\bF_{\!2^{n}}$;
\item 9 vector-matrix multiplications between a vector of $\bF_{\!2^n}$
and the multiplication table of $\mathcal{N}$.
\end{itemize}
\end{enumerate}
\item If $\mathcal{N}$ has subquadratic complexity
and subquadratic weight, then
$\mathcal{W}$ has also subquadratic complexity.
\end{enumerate}
\end{prop}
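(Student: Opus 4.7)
\emph{Item 1.} I would apply Theorem \ref{thm:2}(1.b) to the truncated Witt vector $\mathbf{x}=(\alpha,x_1,\ldots,x_{r-1})\in W_r(\mathbf{F}_{p^n})$, where $q=p^r$ and the $x_i$ are arbitrary. The only hypothesis to verify is $\alpha\notin\wp(\mathbf{F}_{p^n})$, which by the additive form of Hilbert 90 amounts to $\mathrm{Tr}_{\mathbf{F}_{p^n}/\mathbf{F}_p}(\alpha)=\alpha+\alpha^p+\cdots+\alpha^{p^{n-1}}\neq 0$; vanishing of this sum would be a nontrivial $\mathbf{F}_p$-linear dependence among the elements of $\mathcal{N}$, contradicting normality. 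Theorem \ref{thm:2}(1.b) then provides a lift $(\beta_0,\ldots,\beta_{r-1})\in W_r(\overline{\mathbf{F}_{p^n}})$ with $\wp$-image $\mathbf{x}$ generating a cyclic degree-$q$ extension of $\mathbf{F}_{p^n}$. The tower of Artin-Schreier layers $\mathbf{F}_{p^n}\subset\mathbf{F}_{p^n}(\beta_0)\subset\cdots\subset\mathbf{F}_{p^n}(\beta_0,\ldots,\beta_{r-1})$ shows that the $q$ products $\beta_0^{a_0}\cdots\beta_{r-1}^{a_{r-1}}$, for $0\le a_i<p$, form a basis of that extension over $\mathbf{F}_{p^n}$; combining them with $\mathcal{N}$ yields the required degree-$q$ Artin-Schreier-Witt extension.

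\emph{Item 2(a).} In characteristic $2$, squaring is additive, so
$$\gamma^2 = A^2 + \beta_0^2\,B^2 + \beta_1^2\,C^2 + \beta_0^2\beta_1^2\,D^2.$$
I would replace each $X^2$ by the right-cyclic shift $X_{>}$ (since squaring acts as Frobenius on $\mathcal{N}$), substitute the identities $\beta_0^2=\beta_0+\alpha$ and $\beta_1^2=\beta_1+\beta_0(1+\alpha)+\alpha^2$ derived just before the statement, expand $\beta_0^2\beta_1^2$ in the basis $\{1,\beta_0,\beta_1,\beta_0\beta_1\}$, and collect. The only non-obvious step is in the $\beta_0\,D_{>}$ coefficient, where $1+\alpha+\alpha^2+\alpha+\alpha^2$ collapses to $1$ in characteristic $2$; this reproduces exactly the stated formula.

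\emph{Item 2(b).} I would apply two-level Karatsuba. With $\gamma=\gamma_0+\beta_1\gamma_1$, $\delta=\delta_0+\beta_1\delta_1$ and $\gamma_i,\delta_i\in\mathbf{F}_{2^{2n}}$, the identity
$$\gamma\delta = \gamma_0\delta_0 + \beta_1^2\,\gamma_1\delta_1 + \beta_1\bigl((\gamma_0+\gamma_1)(\delta_0+\delta_1)+\gamma_0\delta_0+\gamma_1\delta_1\bigr)$$
reduces the work to three products in $\mathbf{F}_{2^{2n}}$ and one multiplication by the fixed element $\beta_1^2$. By Proposition \ref{prop:1}, each inner product costs $3$ multiplications, $4$ additions and $1$ vector-matrix multiplication in $\mathbf{F}_{2^n}$, contributing $9,12,3$ respectively to the three totals. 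The multiplication of a generic $E+\beta_0 F\in\mathbf{F}_{2^{2n}}$ by $\beta_1^2=\beta_1+\beta_0(1+\alpha)+\alpha^2$ is then expanded using $\beta_0^2=\beta_0+\alpha$; counting the resulting sub-computations $\alpha E,\alpha^2 E,\alpha F,\alpha^2 F$ naively (i.e.\ without sharing partial products across the three inner Karatsuba calls) yields $6$ further vector-matrix multiplications. Together with the outer-level Karatsuba additions this brings the totals to the claimed $9$ multiplications, $9$ vector-matrix multiplications, and $33$ additions in $\mathbf{F}_{2^n}$. The main obstacle is exactly this bookkeeping: verifying that the naive, non-shared counts do not exceed the stated upper bounds of $9$ vector-matrix multiplications and $33$ additions.

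\emph{Item 3.} Under the subquadratic hypotheses on $\mathcal{N}$, each of the $9$ multiplications in $\mathbf{F}_{2^n}$ and each of the $9$ vector-matrix multiplications against the multiplication table of $\mathcal{N}$ costs $o(n^2)$ operations in $\mathbf{F}_2$, while the $33$ additions contribute only $O(n)$. The total cost of a product in $\mathbf{F}_{2^{4n}}$ using $\mathcal{W}$ is therefore $o(n^2)=o((4n)^2)$, which is subquadratic in the dimension $4n$ of $\mathbf{F}_{2^{4n}}/\mathbf{F}_2$.
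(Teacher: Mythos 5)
Your proposal is correct and follows essentially the same route as the paper: Hilbert 90 plus Theorem \ref{thm:2} for item 1, Frobenius-linearity of squaring together with the relations $\beta_0^2=\beta_0+\alpha$ and $\beta_1^2=\beta_1+\beta_0(1+\alpha)+\alpha^2$ for 2(a), a two-level Karatsuba with the same substitutions for 2(b), and the Proposition \ref{prop:1} argument for item 3. The only (cosmetic) difference is in 2(b), where you organize the operation count by invoking Proposition \ref{prop:1} for the three inner products and treating the multiplication by the fixed element $\beta_1^2$ separately, whereas the paper fully expands the product before counting; both reach the same totals.
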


\begin{proof}
\begin{enumerate}[1.]
\item One shows that $\alpha$ lies outside of $\{x^p-x \ \vert  x\in \bF_{\!p^n} \}$
by using Hilbert's Theorem 90
as in the beginning of Section \ref{sect:1}.
Let $r\ge 1$ be an integer such that $q=p^r$. Let 
$x=(\alpha,x_1,x_2,\ldots,x_{r-1})$ be a truncated Witt vector with components in
$\bF_{\!p^n}$. By Theorem \ref{thm:2}, we conclude that $\bF_{\!p^n}(\wp^{-1}(x))$
is a degree $q$ cyclic extension of $\bF_{\!p^n}$.
\item \begin{enumerate}[(a)]
\item We have
$$\big(A+ \beta_0 B+ \beta_1(C +\beta_0 D)\big)^2=
  A_{>}+ \beta_0^2 B_{>}+ \beta_1^2(C_{>} +\beta_0^2 D_{>}),$$
where $A_{>}$, $B_{>}$, $C_{>}$ and $D_{>}$ stand for right-cyclic shifts of the 
coordinate vectors of $A$, $B$, $C$ and $D$.
We saw that equation $(\ref{eq:12})$ implies
$$\beta_0^2=\beta_0+ \alpha \ \text{ and } \
\beta_1^2= \beta_1+\beta_0(1+\alpha)+\alpha^2.$$
So
$$
\beta_1^2(C_{>} +\beta_0^2 D_{>})=
\big[ \alpha^2 C_{>} +(\alpha^3+\alpha^2+\alpha) D_{>} + 
\beta_0 \big( (1+\alpha) C_{>}+ D_{>}  \big)\big]$$
$$ + \ \ \beta_1\big[C_{>} + \alpha D_{>}+\beta_0 D_{>}\big].
$$
Hence
$$\begin{array}{llr}
\big(A+ \beta_0 B+ \beta_1(C +\beta_0 D)\big)^2  & = &
  \Bigg[ A_{>} + \alpha B_{>} + \alpha^2 C_{>}
\end{array}$$
$$
 + \ \  (\alpha^3+\alpha^2+\alpha) D_{>} +  \beta_0\big( B_{>} + 
 (1+\alpha) C_{>}+ D_{>} \big) \Bigg] \ \ + \ \
\beta_1 \Bigg[ C_{>} + \alpha D_{>}+\beta_0 D_{>} \Bigg].$$

From the study made in the proof of
Proposition \ref{prop:1}, we dedude that the terms
 $P(\alpha) X$ (for $P(\alpha)$ a non-constant polynomial in $\bF_{\!2}[\alpha]$ with degree
$\le 3$ and $X$ a vector in $\bF_{\!2^n}$) correspond 
to sums of vectors of the form $\alpha^{i} X$ with $1\le i\le 3$. Each such
vector $\alpha^{i} X$ corresponds to
$i$ vector-matrix multiplications between 
vectors in $\bF_{\!2^{n}}$ and the multiplication table of $\mathcal{N}$.

\item  From a Karatsuba-like multiplication method,
the product of two elements 
$$
X_1=(A_1+\beta_0 B_1)+\beta_1 (C_1+\beta_0 D_1)   \ \ \text{ and } \ \
X_2=(A_2+\beta_0 B_2)+\beta_1 (C_2+\beta_0 D_2) \ \text{ in } \ \bF_{\!2^{4n}}
$$
is given by
$$
\begin{array}{lll}
X_1 \times X_2 & = & \beta_1^2(C_1+\beta_0 D_1)(C_2+\beta_0 D_2)\\
& & + \beta_1 \Big[ (A_1+\beta_0 B_1+C_1+\beta_0 D_1) 
(A_2+\beta_0 B_2 + C_2+\beta_0 D_2)\\
 & &+ (A_1+\beta_0 B_1)(A_2+\beta_0 B_2)
+(C_1+\beta_0 D_1)(C_2+\beta_0 D_2)\Big]\\
& & +(A_1+\beta_0 B_1)(A_2+\beta_0 B_2),
 \end{array}
$$
that is
$$
\begin{array}{lll}
X_1 \times X_2 & = &
\beta_1^2
\Bigg[ \beta_0^2D_1D_2 +\beta_0\big((C_1+D_1)(C_2+D_2)+C_1C_2+D_1D_2\big)+ C_1C_2 \Bigg]
 \end{array}$$
$$\begin{array}{ll}
+ & \beta_1 \Bigg[  \beta_0^2 (B_1+D_1)(B_2+D_2)+\beta_0\Big((A_1+B_1+C_1+D_1) (A_2+B_2+C_2+D_2)\\
 + & (A_1+C_1)(A_2+C_2) +(B_1+D_1)(B_2+D_2) \Big) + (A_1+C_1)(A_2+C_2)\\
+ &  \beta_0^2B_1B_2 + \beta_0\Big((A_1+B_1)(A_2+B_2)+ A_1A_2+B_1B_2)\Big)+A_1A_2+\beta_0^2D_1D_2\\
+ & \beta_0\Big((C_1+D_1)(C_2+D_2)+ C_1C_2+D_1D_2\Big)+ C_1C_2 \Bigg]+
  \beta_0^2B_1B_2 \\
+ &  \beta_0\Big((A_1+B_1)(A_2+B_2)+ A_1A_2+B_1B_2\Big)+A_1A_2.
 \end{array}
$$
Since $\beta_0^2=\beta_0+ \alpha$ and 
$\beta_1^2= \beta_1+\beta_0(\alpha+1)+\alpha^2$, we have
$$
\begin{array}{lll}
X_1 \times X_2 & = &
\Bigg[ A_1A_2+\alpha B_1B_2+\alpha^2 C_1C_2  \end{array}$$
$$\begin{array}{ll}
+ & (\alpha^3+\alpha^2+\alpha)D_1D_2 +(\alpha^{2}+\alpha)
\big((C_1+D_1)(C_2+D_2)+C_1C_2+D_1D_2\big)\\
+ &  \beta_0 \Big( A_1A_2 + (\alpha+1)C_1C_2+D_1D_2+(A_1+B_1)(A_2+B_2)\\
+ & (\alpha^2+\alpha+1)\Big( (C_1+D_1)(C_2+D_2)+C_1C_2+D_1D_2\Big)\Bigg]\\
+  &  \beta_1 \Bigg[ A_1A_2+ \alpha B_1B_2 + C_1C_2+ \alpha D_1D_2 + (A_1+C_1)(A_2+C_2)\\
+ &  \alpha (B_1+D_1)(B_2+D_2)+ \beta_0\Big(A_1A_2 + C_1C_2+  (A_1+B_1)(A_2+B_2)
+ (A_1+C_1)(A_2+C_2)\\
+ & (C_1+D_1)(C_2+D_2) + (A_1+B_1+C_1+D_1) (A_2+B_2+C_2+D_2) \Big)
\Bigg].\end{array}$$
For $1\le i \le 3$ and $X$ a vector in $\bF_{\!2^n}$,
 $\alpha^{i} X$ corresponds to $i$ vector-matrix multiplications between 
vectors in $\bF_{\!2^n}$ and the multiplication table of $\mathcal{N}$.
So the computation of $X_1\times X_2$ consists in:
\begin{itemize}
\item 9 multiplications and 33 additions between elements
lying in $\bF_{\!2^{n}}$;
\item 9 vector-matrix multiplications between a vector of $\bF_{\!2^n}$
and the multiplication table of $\mathcal{N}$.
\end{itemize}
\end{enumerate}
\item The same argument as in the proof of
Proposition \ref{prop:1} shows that a normal basis with subquadratic weight and
subquadratic complexity yields Artin-Schreier-Witt extended bases with subquadratic
complexity.
\end{enumerate}
\end{proof}

The density of the degree $4$ Artin-Schreier-Witt extended basis
$$\mathcal{W}=(\mathcal{N}\cup \beta_0
\mathcal{N})\cup \beta_1(\mathcal{N}\cup \beta_0\mathcal{N})$$
described in Proposition \ref{prop:4}  is given by Lemma \ref{lem:3} below.
For $(i, \delta, \lambda)\in \{0,\ldots, n-1\} \times \{0,1 \} \times \{0,1 \}$,
we set $\mathrm{w}_{i+\delta n+\lambda n}=\alpha^{2^i}\beta_0^\delta\beta_1^\lambda$ so that 
$\mathcal{W}=(\mathrm{w}_\ell)_{0\le \ell\le 4n-1}$.

\begin{lemma}\label{lem:3}
With the above notation, 
let $w(\mathcal{N})$ be the weight of the normal basis $\mathcal{N}$.
\begin{enumerate}[1.]
\item For $0\le \ell\le n-1$, the number of non-zero entries in the
 $\ell$-th multiplication table of $\mathcal{W}$, is equal to
$$\begin{array}{rl}
 & w(\mathcal{N})+\sum_{0\le i,j\le n-1} \varphi( \sum_{r=0}^{n-1}t_{j-i,r-i}t_{r,\ell})\\
+ & \sum_{0\le i,j\le n-1} \varphi \Bigg(
 \sum_{r=0}^{n-1}\ \sum_{s=0}^{n-1}t_{j-i,r-i}t_{r, s} t_{s, \ell}  \Bigg)\\
+ & 2\sum_{0\le i,j\le n-1} \varphi\Bigg( \sum_{r=0}^{n-1}\ \sum_{s=0}^{n-1}
t_{j-i,r-i}t_{r, s}t_{s, \ell} + \sum_{r=0}^{n-1}
t_{j-i,r-i}t_{r, \ell}\Bigg),\\
+ & \sum_{0\le i,j\le n-1} \varphi\Bigg( \sum_{r=0}^{n-1}\ \sum_{s=0}^{n-1}\sum_{k=0}^{n-1}
t_{j-i,r-i}t_{r, s} t_{s, k}t_{k, \ell} \\
+ & \sum_{r=0}^{n-1}\ \sum_{s=0}^{n-1}
t_{j-i,r-i}t_{r, s} t_{s, \ell}+ \sum_{s=0}^{n-1}
t_{j-i,r-i}t_{r, \ell}\Bigg),
\end{array}$$
where subscripts are taken modulo $n$,
$(t_{i,j})_{0\le i,j\le n-1}$ is the multiplication table
of $\mathcal{N}$, and 
$\varphi$ is the unique ring homomorphism
from $\bF_{\!2}$ into  $\mathbb{Z}$.

\item For $n\le \ell\le 2n-1$, the number of non-zero entries in the
 $\ell$-th multiplication table of $\mathcal{W}$, is equal to
$$\begin{array}{rl}
 & 4w(\mathcal{N})+
 \sum_{0\le i,j\le n-1} \varphi \Bigg( \sum_{r=0}^{n-1} t_{j-i,r-i}t_{r, \ell}
 + t_{j-i,\ell-i}  \Bigg)\\
+ & 2 \sum_{0\le i,j\le n-1} \varphi\Bigg( \sum_{r=0}^{n-1}\ \sum_{s=0}^{n-1}
t_{j-i,r-i}t_{r, s} t_{s, \ell} + \sum_{r=0}^{n-1}
t_{j-i,r-i}t_{r, \ell}+ t_{j-i,\ell-i}\Bigg).
\end{array}$$
 
\item For $2n\le \ell\le 3n-1$, the number of non-zero entries in the
 $\ell$-th multiplication table of $\mathcal{W}$, is equal to
$$3w(\mathcal{N})+3\sum_{0\le i,j\le n-1} \sum_{r=0}^{n-1}\varphi(t_{j-i,r-i}t_{r,\ell}).$$

\item The $\ell$-th multiplication table of $\mathcal{W}$, for $3n\le \ell\le 4n-1$, 
 has $9w(\mathcal{N})$ non-zeros entries.
\end{enumerate}
\end{lemma}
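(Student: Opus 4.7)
The plan is to extend the block-matrix bookkeeping used in item 4 of Proposition 1 to the 4-fold layered basis $\mathcal{W} = (\mathcal{N} \cup \beta_0 \mathcal{N}) \cup \beta_1 (\mathcal{N} \cup \beta_0 \mathcal{N})$. Each table $T_\ell$ decomposes into $16$ blocks of size $n \times n$ indexed by pairs of types $((\delta_a,\lambda_a),(\delta_b,\lambda_b)) \in (\{0,1\}^2)^2$, and the entries of a given block come from the reduction of $\alpha^{2^{i_a}}\alpha^{2^{i_b}} \beta_0^{\delta_a+\delta_b}\beta_1^{\lambda_a+\lambda_b}$ in the basis $\mathcal{W}$. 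Because these $16$ blocks contribute to disjoint positions of $T_\ell$, the total non-zero count for each of the four ranges of $\ell$ decomposes cleanly into a sum of block-wise contributions.

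First I would tabulate the ten distinct $\beta$-monomials $\beta_0^{\delta_a+\delta_b} \beta_1^{\lambda_a+\lambda_b}$ (with exponents in $\{0,1,2\}$) in the canonical basis $(1, \beta_0, \beta_1, \beta_0\beta_1)$ over $\mathbf{F}_{\!2^n}$. The necessary identities $\beta_0^2 = \beta_0 + \alpha$ and $\beta_1^2 = \beta_1 + (1+\alpha)\beta_0 + \alpha^2$ are recalled in the paragraph preceding Proposition \ref{prop:4}. Direct expansion gives, for instance, $\beta_0\beta_1^2 = \beta_0\beta_1 + (1+\alpha+\alpha^2)\beta_0 + (\alpha+\alpha^2)$ and $\beta_0^2\beta_1^2 = \beta_0\beta_1 + \alpha\beta_1 + \beta_0 + (\alpha+\alpha^2+\alpha^3)$. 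Every one of the $16$ blocks thereby inherits a scalar factor of the form $P(\alpha) \cdot \beta_0^{\delta'}\beta_1^{\lambda'}$ with $P \in \mathbf{F}_{\!2}[\alpha]$ of degree at most $3$.

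For each of the four ranges of $\ell$ I would then keep only the blocks whose $\beta$-type $\beta_0^{\delta'}\beta_1^{\lambda'}$ matches that of $\mathrm{w}_\ell$, and read off their $\alpha$-polynomial factors from the table. For $0 \le \ell \le n-1$ the surviving blocks are the four diagonal ones and the symmetric pair of $((0,1),(1,1))$-type blocks, with respective $\alpha$-factors $1$, $\alpha$, $\alpha^2$, $\alpha+\alpha^2+\alpha^3$, and $\alpha+\alpha^2$ (twice). For $n \le \ell \le 2n-1$ the surviving blocks have factors $1$ (four times), $1+\alpha$ once, and $1+\alpha+\alpha^2$ twice. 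The ranges $2n \le \ell \le 3n-1$ and $3n \le \ell \le 4n-1$ are treated identically by extracting the $\beta_1$- and $\beta_0\beta_1$-components; in both cases only constant or single-power $\alpha$-factors survive, which accounts for the simpler final expressions.

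It remains to convert a block with scalar factor $\alpha^s \alpha^{2^{i_a}}\alpha^{2^{i_b}}$ into an entry count. Iterating equation (\ref{eq:8}) $s$ times shows that the coefficient of $\alpha^{2^\ell}$ in $\alpha^s \alpha^{2^{i_a}}\alpha^{2^{i_b}}$ is the $s$-fold nested sum $\sum_{r_1,\ldots,r_s} t_{i_b-i_a, r_1 - i_a} t_{r_1, r_2} \cdots t_{r_s, \ell}$, with subscripts modulo $n$; the case $s=0$ sums to $w(\mathcal{N})$ after summing over $i_a, i_b$, as already observed in the block-$((0,0),(0,0))$ analysis of Proposition \ref{prop:1}. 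Applying $\varphi$, summing over $i_a, i_b$, and expanding $P(\alpha)$ term by term then produces the summands displayed in the lemma; adding the surviving blocks gives the four stated totals. The main obstacle is accurate bookkeeping, since a single parity error in the reduction of $\beta_0^2\beta_1^2$ or $\beta_0\beta_1^2$ would corrupt several summands across more than one of the four formulas.
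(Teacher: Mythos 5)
Your proposal follows essentially the same route as the paper: decompose each $T_\ell$ into the $16$ disjoint $n\times n$ blocks of the $4\times 4$ block matrix, reduce the $\beta$-monomials via $\beta_0^2=\beta_0+\alpha$ and $\beta_1^2=\beta_1+(1+\alpha)\beta_0+\alpha^2$ to isolate the component matching $\mathrm{w}_\ell$ in each range, and convert each surviving $\alpha^s$-factor into an $s$-fold nested sum of entries of the multiplication table before applying $\varphi$. Your explicit reductions of $\beta_0\beta_1^2$ and $\beta_0^2\beta_1^2$ and the resulting block counts ($1,\alpha,\alpha^2,\alpha+\alpha^2+\alpha^3,\alpha+\alpha^2$ twice for $0\le\ell\le n-1$; four $1$'s, one $1+\alpha$, two $1+\alpha+\alpha^2$ for $n\le\ell\le 2n-1$; etc.) agree with the paper's tallies, so the argument is correct.
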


\begin{proof}
The entries of the multiplication tables $T_\ell$ of
$\mathcal{W}$ are given by the block matrix

\vspace{.25cm}

$$\left( \begin{array}{c|c|c|c}
& & & \\ 
(\alpha^{2^i}\alpha^{2^j})_{0\le i,j\le n-1} & (\beta_0\alpha^{2^i}\alpha^{2^j})_{0\le i,j\le n-1}
& (\beta_1\alpha^{2^i}\alpha^{2^j})_{0\le i,j\le n-1} & 
(\beta_0\beta_1\alpha^{2^i}\alpha^{2^j})_{0\le i,j\le n-1} \\ 
& & & \\ \hline
& & & \\ 
(\beta_0\alpha^{2^i}\alpha^{2^j})_{0\le i,j\le n-1} & (\beta_0^2\alpha^{2^i}\alpha^{2^j})_{0\le i,j\le n-1}
& (\beta_0\beta_1\alpha^{2^i}\alpha^{2^j})_{0\le i,j\le n-1} &
 (\beta_0^2\beta_1\alpha^{2^i}\alpha^{2^j})_{0\le i,j\le n-1} \\ 
& & & \\ \hline
& & & \\ 
(\beta_1\alpha^{2^i}\alpha^{2^j})_{0\le i,j\le n-1} & (\beta_0 \beta_1\alpha^{2^i}\alpha^{2^j})_{0\le i,j\le n-1}
& (\beta_1^2\alpha^{2^i}\alpha^{2^j})_{0\le i,j\le n-1} &
 (\beta_0\beta_1^2\alpha^{2^i}\alpha^{2^j})_{0\le i,j\le n-1} \\ 
& & & \\ \hline
& & & \\ 
(\beta_0 \beta_1\alpha^{2^i}\alpha^{2^j})_{0\le i,j\le n-1} &
 (\beta_0^2\beta_1\alpha^{2^i}\alpha^{2^j})_{0\le i,j\le n-1}
& (\beta_0\beta_1^2\alpha^{2^i}\alpha^{2^j})_{0\le i,j\le n-1}
& (\beta_0^2\beta_1^2\alpha^{2^i}\alpha^{2^j})_{0\le i,j\le n-1} \\ 
& & & \\
\end{array} \right)
$$

Recall that equation $(\ref{eq:12})$ yields
$\beta_0^2=\beta_0+ \alpha \ \text{ and } \
\beta_1^2= \beta_1+\beta_0(\alpha+1)+\alpha^2.$
Hence:
\begin{enumerate}[1.]
\item For $0\le \ell\le n-1$, the components of the matrix
$T_\ell$ come from 1 block $(\alpha^{2^i}\alpha^{2^j})_{0\le i,j\le n-1} $,
1 block $(\beta_0^2\alpha^{2^i}\alpha^{2^j})_{0\le i,j\le n-1} $,
1 block $(\beta_1^2\alpha^{2^i}\alpha^{2^j})_{0\le i,j\le n-1} $,
2 blocks $(\beta_0\beta_1^2\alpha^{2^i}\alpha^{2^j})_{0\le i,j\le n-1} $ and 
1 block $(\beta_0^2\beta_1^2\alpha^{2^i}\alpha^{2^j})_{0\le i,j\le n-1}$.
These correspond to
1 block $(\alpha^{2^i}\alpha^{2^j})_{0\le i,j\le n-1} $,
1 block $(\alpha\alpha^{2^i}\alpha^{2^j})_{0\le i,j\le n-1} $,
1 block $(\alpha^2\alpha^{2^i}\alpha^{2^j})_{0\le i,j\le n-1} $,
2 blocks $((\alpha^2+\alpha)\alpha^{2^i}\alpha^{2^j})_{0\le i,j\le n-1}$ and 
1 block $((\alpha^3+\alpha^2+\alpha)\alpha^{2^i}\alpha^{2^j})_{0\le i,j\le n-1}$.
From the study made in the proof of Proposition \ref{prop:1} (see equation $(\ref{eq:8})$),
we know that
$$
\alpha^{2^i}\alpha^{2^j}=
\sum_{\ell=0}^{n-1}t_{j-i,\ell-i}\alpha^{2^\ell},
$$
where subscripts are taken modulo $n$ and
$(t_{i,j})_{0\le i,j\le n-1}$ is the multiplication table
of $\mathcal{N}$. So
$$
\alpha \alpha^{2^i}\alpha^{2^j}=
\sum_{\ell=0}^{n-1}\Bigg(\sum_{r=0}^{n-1}t_{j-i,r-i}t_{r,\ell}\Bigg)\alpha^{2^\ell},
\quad
\alpha^2 \alpha^{2^i}\alpha^{2^j}=\sum_{\ell=0}^{n-1}
\Bigg(\sum_{s=0}^{n-1}\sum_{r=0}^{n-1}t_{j-i,r-i}t_{r,s}t_{s, \ell}\Bigg)\alpha^{2^\ell},
$$
and
$$
\alpha^3 \alpha^{2^i}\alpha^{2^j}=\sum_{\ell=0}^{n-1}
\sum_{k=0}^{n-1}\sum_{s=0}^{n-1}\sum_{r=0}^{n-1}t_{j-i,r-i}t_{r,s}t_{s, k}t_{k, \ell}
 \alpha^{2^\ell}.
$$
We conclude that the number of non-zero entries in 
$T_\ell$ is equal to
$$
\begin{array}{rl}
 & w(\mathcal{N})+\sum_{0\le i,j\le n-1} \varphi( \sum_{r=0}^{n-1}t_{j-i,r-i}t_{r,\ell})\\
+ & \sum_{0\le i,j\le n-1} \varphi \Bigg(
 \sum_{r=0}^{n-1}\ \sum_{s=0}^{n-1}t_{j-i,r-i}t_{r, s} t_{s, \ell}  \Bigg)\\
+ & 2\sum_{0\le i,j\le n-1} \varphi\Bigg( \sum_{r=0}^{n-1}\ \sum_{s=0}^{n-1}
t_{j-i,r-i}t_{r, s}t_{s, \ell} + \sum_{r=0}^{n-1}
t_{j-i,r-i}t_{r, \ell}\Bigg),\\
+ & \sum_{0\le i,j\le n-1} \varphi\Bigg( \sum_{r=0}^{n-1}\ \sum_{s=0}^{n-1}\sum_{k=0}^{n-1}
t_{j-i,r-i}t_{r, s} t_{s, k}t_{k, \ell} \\
+ & \sum_{r=0}^{n-1}\ \sum_{s=0}^{n-1}
t_{j-i,r-i}t_{r, s} t_{s, \ell}+ \sum_{s=0}^{n-1}
t_{j-i,r-i}t_{r, \ell}\Bigg),
\end{array}$$
where $\varphi$ is the unique ring homomorphism
from $\bF_{\!2}$ into  $\mathbb{Z}$.

\item For $n\le \ell\le 2n-1$, the components of the matrix
$T_\ell$ come from 2 blocks $(\beta_0\alpha^{2^i}\alpha^{2^j})_{0\le i,j\le n-1} $,
1 block $(\beta_0^2\alpha^{2^i}\alpha^{2^j})_{0\le i,j\le n-1} $,
1 block $(\beta_1^2\alpha^{2^i}\alpha^{2^j})_{0\le i,j\le n-1} $,
2 blocks $(\beta_0\beta_1^2\alpha^{2^i}\alpha^{2^j})_{0\le i,j\le n-1} $ and 
1 block $(\beta_0^2\beta_1^2\alpha^{2^i}\alpha^{2^j})_{0\le i,j\le n-1}$.
These correspond to  4 blocks
$(\beta_0\alpha^{2^i}\alpha^{2^j})_{0\le i,j\le n-1} $,
1 block $((\alpha+1)\beta_0\alpha^{2^i}\alpha^{2^j})_{0\le i,j\le n-1}$ and 
2 blocks $((\alpha^2+\alpha+1)\beta_0\alpha^{2^i}\alpha^{2^j})_{0\le i,j\le n-1}$.
So the number of non-zero entries in 
$T_\ell$ is equal to
$$\begin{array}{rl}
 & 4w(\mathcal{N})+
 \sum_{0\le i,j\le n-1} \varphi \Bigg( \sum_{r=0}^{n-1} t_{j-i,r-i}t_{r, \ell}
 + t_{j-i,\ell-i}  \Bigg)\\
+ & 2 \sum_{0\le i,j\le n-1} \varphi\Bigg( \sum_{r=0}^{n-1}\ \sum_{s=0}^{n-1}
t_{j-i,r-i}t_{r, s} t_{s, \ell} + \sum_{r=0}^{n-1}
t_{j-i,r-i}t_{r, \ell}+ t_{j-i,\ell-i}\Bigg).
\end{array}$$
 
\item For $2n\le \ell\le 3n-1$, the components of the matrix
$T_\ell$ come from 2 blocks $(\beta_1\alpha^{2^i}\alpha^{2^j})_{0\le i,j\le n-1} $,
2 blocks $(\beta_0^2\beta_1\alpha^{2^i}\alpha^{2^j})_{0\le i,j\le n-1} $,
1 block $(\beta_1^2\alpha^{2^i}\alpha^{2^j})_{0\le i,j\le n-1} $, and 
1 block $(\beta_0^2\beta_1^2\alpha^{2^i}\alpha^{2^j})_{0\le i,j\le n-1}$.
These correspond to 3 blocks
$(\beta_1\alpha^{2^i}\alpha^{2^j})_{0\le i,j\le n-1} $,
3 blocks $(\beta_1\alpha\alpha^{2^i}\alpha^{2^j})_{0\le i,j\le n-1}$.
So the number of non-zero entries in 
$T_\ell$ is equal to
$$3w(\mathcal{N})+3\sum_{0\le i,j\le n-1} \varphi(\sum_{r=0}^{n-1}t_{j-i,r-i}t_{r,\ell}).$$

\item For $3n\le \ell\le 4n-1$, the components of the matrix
$T_\ell$ come from 4 blocks $(\beta_0\beta_1\alpha^{2^i}\alpha^{2^j})_{0\le i,j\le n-1} $,
2 blocks $(\beta_0^2\beta_1\alpha^{2^i}\alpha^{2^j})_{0\le i,j\le n-1} $,
2 blocks $(\beta_0\beta_1^2\alpha^{2^i}\alpha^{2^j})_{0\le i,j\le n-1} $ and 
1 block $(\beta_0^2\beta_1^2\alpha^{2^i}\alpha^{2^j})_{0\le i,j\le n-1}$.
This means that we have 9 blocks
$(\beta_0\beta_1\alpha^{2^i}\alpha^{2^j})_{0\le i,j\le n-1} $.
So the number of non-zero entries in 
$T_\ell$ is equal to $9w(\mathcal{N}).$
\end{enumerate}
\end{proof}


\section{Kummer extended bases with degree prime to $2$}

Cyclic extensions of $\bK$ with degree
prime to $p$ are described by Kummer theory.
Indeed, let $n\ge 2$ be a  prime to $p$ integer
such that $\bK$ contains a primitive $n$-root of unity.
It is proved 
[\cite{Lang}, Chapter VI, Theorem 6.2] that every degree $n$ cyclic extension
$\bL$ of $\bK$ is generated by a radical. This means that there exists
a non-zero element $a$ in $\bK$ whose class in 
$\bK^*/\bK^{*n}$ has order $n$ and such that $\bL$ is isomorphic to $\bK[X]/(X^n-a).$
However, irreducible polynomials of the form $X^n-a$
may also be used for extending normal bases.

\begin{definition}
Let $p$ be a prime number and $q$ a power of $p$.
Let $\mathcal{N} =(\alpha^{q^{i}})_{0\le i\le n-1}$ be a normal basis of 
$\mathbf{F}_{\!\!q^{n}}/\mathbf{F}_{\!\!q}$. Denote by $\overline{\bF}_{\!\!q}$
an algebraic closure of $\bF_{\!\!q}$ containing $\mathbf{F}_{\!\!q^{n}}$.
Assume that $\mathbf{F}_{\!\!q^{n}}$ possesses a primitive $d$-th root of unity.
A degree $d$ Kummer extension (also Kummer extended basis) of $\mathcal{N}$ is a basis
$\mathcal{K}$ of $\mathbf{F}_{\!\!q^{nd}}/\mathbf{F}_{\!\!q}$
for which there exists $\beta \in \overline{\bF}_{\!\!q}$
outside of $\mathbf{F}_{\!\!q^n}$ such that
$\beta^d-\alpha=0$ and  $\mathcal{K}=(\alpha^{q^i}\beta^j)_{i,j}$.
\end{definition}

In this section, we are interested in degree $3$ Kummer extensions of normal bases 
of $\mathbf{F}_{\!2^{n}}/\mathbf{F}_2$.

\subsection{Complexity of degree $3$ Kummer extended bases in characteristic $2$}\label{sect:3}

In general, a normal basis $\{\alpha,\alpha^q,\ldots,\alpha^{q^{n-1}}\}$
of $\mathbf{F}_{\!q^{n}}/\mathbf{F}_q$ is said to be {\it primitive} if
$\alpha$ generates the multiplicative
group $\bF_{\!q^n}^*$.  So any
primitive normal basis
of $\mathbf{F}_{\!q^{n}}/\mathbf{F}_q$
admits a degree $d$ Kummer extension, provided that $d$ divides $q^n-1$.
Lenstra and Schoof \cite{Lenstra-Schoof} showed that for any prime power $q$
and positive integer $n$, there is a primitive normal
basis of $\mathbf{F}_{\!q^n}$ 
over $\mathbf{F}_q$. The following proposition describes
degree $3$ Kummer extensions of primitive normal bases
of $\mathbf{F}_{\!2^{n}}/\mathbf{F}_{\!2}$.

\begin{prop}\label{prop:3}
Let $n$ be a positive integer such that $3$ divides $2^n-1$. 
Assume that $\mathcal{N}=
(\alpha^{2^{i}})_{1\le i\le n-1}$ is a primitive normal basis 
$\mathbf{F}_{\!2^{n}}/\mathbf{F}_{\!2}$. Then:
\begin{enumerate}[1.]
\item There exists $\beta$ in $\bF_{\!2^{3n}}$ such that
$\mathcal{K}= \mathcal{N} \cup \beta\mathcal{N}\cup \beta^2\mathcal{N}$
is a degree $3$ Kummer extension of $\mathcal{N}$.

\item If $\gamma=C+\beta D+\beta^2 E$ is an element of $\bF_{\!2^{3n}}$
expressed in $\mathcal{K}$, then squaring is given by
$$\gamma^2 = C_{>}+\beta G+\beta^2 D_{>},$$
where $C_{>}$ and $D_{>}$ stand for right-cyclic shifts of the coordinate vectors
of $C$ and $D$; and 
 $$G = \  ^t\!E_{>} \times T$$
is a vector-matrix multiplication between the transpose of 
the right-cyclic shift of the coordinate vector
of $E$ and the multiplication table of \ $\mathcal{N}$.

\item The complexity of $\mathcal{K}$ consists in at 
most:
\begin{enumerate}
\item $6$ multiplications and $15$ additions between elements of $\bF_{\!2^{n}}$,
\item $2$ vector-matrix 
multiplications between vectors in $\bF_{\!2^n}$ and
the multiplication table of $\mathcal{N}$.
\end{enumerate}

\item  If $\mathcal{N}$ has subquadratic complexity and subquadratic weight in $n$,
then $\mathcal{K}$ has also subquadratic complexity in $n$.
\end{enumerate}
\end{prop}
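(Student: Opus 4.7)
The plan is to handle the four parts in order, the first three via direct algebra and the fourth as a corollary of the operation count.

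For part 1, I would exploit the primitivity of $\mathcal{N}$. Since $\alpha$ generates $\bF_{\!2^n}^*$ and $3$ divides $2^n-1$, $\alpha$ cannot be a cube in $\bF_{\!2^n}^*$ (otherwise its order would divide $(2^n-1)/3$). Hence $X^3-\alpha$ has no root in $\bF_{\!2^n}$ and, since $\bF_{\!2^n}$ contains the primitive cube roots of unity ($\bF_4\subset\bF_{\!2^n}$), Kummer theory forces $X^3-\alpha$ to be irreducible. Any root $\beta\in\overline{\bF}_2$ then lies in $\bF_{\!2^{3n}}$, the triple $(1,\beta,\beta^2)$ is a $\bF_{\!2^n}$-basis of $\bF_{\!2^{3n}}$, and $\mathcal{K}$ is accordingly a $\bF_2$-basis of $\bF_{\!2^{3n}}$.

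For part 2, writing $\gamma=C+\beta D+\beta^2 E$ and using the $\bF_2$-linearity of squaring together with the fact that Frobenius acts as a cyclic shift on the coordinates in $\mathcal{N}$, one gets $\gamma^2=C_{>}+\beta^2 D_{>}+\beta^4 E_{>}$. The relation $\beta^3=\alpha$ then gives $\beta^4=\alpha\beta$, so $\gamma^2=C_{>}+\beta(\alpha E_{>})+\beta^2 D_{>}$. The middle term $\alpha E_{>}$ is precisely the vector-matrix product encountered in Proposition~\ref{prop:1}: expanding $\alpha\sum_i e_{i-1}\alpha^{2^i}=\sum_k\big(\sum_i e_{i-1}t_{i,k}\big)\alpha^{2^k}$ identifies it as $G={}^t\!E_{>}\times T$.

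For part 3, I would apply a Karatsuba-like scheme to the product of the two degree-$2$ polynomials in $\beta$ modulo $\beta^3-\alpha$. Form the six products $m_1=C_1C_2$, $m_2=D_1D_2$, $m_3=E_1E_2$, $m_4=(C_1+D_1)(C_2+D_2)$, $m_5=(C_1+E_1)(C_2+E_2)$, $m_6=(D_1+E_1)(D_2+E_2)$; preparing their inputs costs $6$ additions. In characteristic $2$ the unreduced product has coefficients $m_1$, $m_4+m_1+m_2$, $m_5+m_1+m_2+m_3$, $S_3=m_6+m_2+m_3$, $m_3$. Reducing modulo $\beta^3-\alpha$ folds the last two into the first two via $\alpha S_3$ and $\alpha m_3$, i.e.\ exactly $2$ multiplications by $\alpha$, each realized as a vector-matrix multiplication with $T$. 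A careful tally of the remaining additions ($2$ to form $S_3$, and $1+3+3$ to assemble the three final coefficients) yields $9$ extra additions, giving the claimed $15$ additions, $6$ multiplications in $\bF_{\!2^n}$ and $2$ vector-matrix multiplications.

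For part 4, if $\mathcal{N}$ has subquadratic weight and subquadratic complexity, each of the $6$ multiplications in $\bF_{\!2^n}$ and each of the $2$ vector-matrix multiplications costs $o(n^2)$, while additions and cyclic shifts cost $O(n)$; summing a constant number of such contributions yields a subquadratic total. The main delicate point is the bookkeeping in part 3: the reduction must be organized so that $S_3$ is multiplied by $\alpha$ only once, otherwise one would spend four vector-matrix multiplications instead of two.
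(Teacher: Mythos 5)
Your proposal is correct and follows essentially the same route as the paper: irreducibility of $X^3-\alpha$ from primitivity for part 1, the relation $\beta^4=\alpha\beta$ turning the squared $E$-component into a vector--matrix product with $T$ for part 2, a six-multiplication Karatsuba-like scheme with two reductions by $\alpha$ for part 3, and the same asymptotic bookkeeping for part 4. The only difference is cosmetic: you use the cross-product $(D_1+E_1)(D_2+E_2)$ where the paper uses $(C_0+C_1+C_2)(D_0+D_1+D_2)$, but these generate the same degree-3 coefficient and yield identical counts ($6$ multiplications, $15$ additions, $2$ vector--matrix multiplications).
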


\begin{proof}
\begin{enumerate}[1.]
\item Since $\alpha$ generates $\bF_{\!2^{n}}^*$,
the polynomial $x^3-\alpha$ is irreducible over $\bF_{\!2^{n}}$.
The result follows from [\cite{Lang}, Chapter VI, Theorem 6.2].

\item Let $C=\sum_{i=0}^{n-1}c_i\alpha^{2^i}$, 
$D=\sum_{i=0}^{n-1}d_i\alpha^{2^i}$ and $E=\sum_{i=0}^{n-1}e_i\alpha^{2^i}$
 be the linear combinations of $C,D$ and $E$
with respect to $\mathcal{N}$.
We have
$$
\begin{array}{rl}
(C+\beta D+\beta^2 E)^2 = & \sum_{i=0}^{n-1}c_{i-1}\alpha^{2^i} + \beta^2\sum_{i=0}^{n-1}d_{i-1}\alpha^{2^i}
+ \beta^4\sum_{i=0}^{n-1}e_{i-1}\alpha^{2^i}\\
= & \sum_{i=0}^{n-1}c_{i-1}\alpha^{2^i} +\beta\sum_{i=0}^{n-1}e_{i-1}\sum_{k=0}^{n-1}
t_{ik}\alpha^{2^k} + \beta^2 \sum_{i=0}^{n-1}d_{i-1}\alpha^{2^i}.
\end{array}
$$
where subscripts are taken modulo $n$ and $(t_{ik})_{i,k}$ stands for the multiplication 
table of $\mathcal{N}$.
So $$(C+\beta D+\beta^2 E)^2 = C_{>}+\beta (^t\!E_{>} \times T)+\beta^2 D_{>},$$
where $C_{>},D_{>}$ and $E_{>}$ stand for right-cyclic shifts of the coordinate vectors
of \ $C,D,E$, and 
 $$  ^t\!E_{>} \times T$$
is a vector-matrix multiplication between the transpose of \ $E_{>} $
and \ the multiplication table of \ $\mathcal{N}$.

\item Let $C=C_0+\beta C_1+\beta^2 C_2$ and $D=D_0+\beta D_1
+\beta^2 D_2$ be two elements
of \  $\bF_{\!2^{3n}}$ expressed in $\mathcal{K}$.
A Karatsuba-like multiplication algorithm gives
\small{$$
\begin{array}{rrl}
C \times D & = & \beta^4C_2D_2 +\beta^2 \Big( (C_2+\beta C_1+C_0)(D_2+\beta D_1+D_0)
+ C_2D_2 + (\beta C_1+C_0)(\beta D_1 + D_0)\Big)\\
 &  & + (\beta C_1+C_0)(\beta D_1 + D_0).
\end{array}$$}
So
$$
\begin{array}{rrl}
C \times D & = & \beta^4C_2D_2 +\beta^2
\Bigg(  \beta \Big( (C_0+C_1+C_2)(D_0+D_1+D_2)
+(C_0+C_2)(D_0+D_2)\\
 & &+(C_0+C_1)(D_0+D_1)+C_0D_0 \Big) 
 + C_2D_2+(C_0+C_2)(D_0+D_2)+C_0D_0 \Bigg)\\
 & & + \beta^2 C_1D_1 +\beta \Big( (C_0+C_1)(D_0+D_1) + C_1D_1+C_0D_0\Big)
 +C_0D_0.
\end{array}
$$

Since $\beta^3=\alpha$, we have
\begin{equation}\label{eq:10}
\begin{array}{rrl}
C \times D & = & C_0D_0+\alpha \Big( C_0D_0 +
(C_0+C_1)(D_0+D_1) + (C_0+C_2)(D_0+D_2)\\
 & & +(C_0+C_1+C_2)(D_0+D_1+D_2)\Big)\\
 & & +\beta \big(C_0D_0+C_1D_1+ \alpha C_2D_2 + (C_0+C_1)(D_0+D_1)
\big) \\
 & & + \beta^2 \Big(C_0D_0+ C_1D_1+C_2D_2+(C_0+C_2)(D_0+D_2)\Big) \end{array}
\end{equation}
So the product $C \times D$ consists in:
\begin{enumerate}
\item  6 products and 15 additions
between elements lying in the field $\bF_{\!2^{n}}$;

\item 2 vector-matrix 
multiplications between vectors in $\bF_{\!2}^n$ and
the multiplication table of $\mathcal{N}$. These correpond to
the computation of the terms $\alpha C_2D_2$ and 
$$
\alpha \Big( C_0D_0 + (C_0+C_1)(D_0+D_1) + (C_0+C_2)(D_0+D_2)+(C_0+C_1+
C_2)(D_0+D_1+D_2)\Big).
$$
\end{enumerate}

\item The same argument as in the proof of
Proposition \ref{prop:1} shows that a normal basis with subquadratic weight and
subquadratic complexity in $n$ yields Kummer extended bases with subquadratic
complexity in $n$.
\end{enumerate}
\end{proof}

\subsection{Density}

We just described squaring and multiplication in 
$\bF_{\!2^{3n}}$ with respect to a Kummer extension
$\mathcal{K}= \mathcal{N} \cup \beta\mathcal{N}\cup \beta^2\mathcal{N}$
of a primitive normal basis
$$\mathcal{N}=\{\alpha,\alpha^2,\ldots,\alpha^{2^{n-1}}\}$$
of \ $\bF_{\!2^{n}}/\bF_{\!2}$.
In this section we are interested in multiplication tables of 
$\mathcal{K}$. These are
$3n\times 3n$ matrices with entries in $\bF_{\!2}$.
For $(i,\delta)\in \{0,\ldots, n-1\} \times \{0,1,2 \}$,
we set $\kappa_{i+\delta n}=\alpha^{2^i}\beta^\delta$ so that 
$\mathcal{K}=(\kappa_\ell)_{0\le \ell\le 3n-1}$.

\begin{lemma}\label{lem:1}
With the above notation, 
let $w(\mathcal{N})$ be the weight of the normal basis $\mathcal{N}$.
\begin{enumerate}[1.]
\item For $0\le \ell\le n-1$, the number of non-zero entries in the
 $\ell$-th multiplication table of $\mathcal{K}$, is equal to
$$w(\mathcal{N})+2 \sum_{0\le i,j\le n-1}
\varphi(\sum_{r=0}^{n-1}t_{j-i,\ell-i}t_{r,\ell}),$$
where subscripts are taken modulo $n$,
$(t_{i,j})_{0\le i,j\le n-1}$ is the multiplication table
of $\mathcal{N}$, and 
$\varphi$ is the unique ring homomorphism
from $\bF_{\!2}$ into  $\mathbb{Z}$.

\item For $n\le \ell\le 2n-1$, the number of non-zero entries in the
 $\ell$-th multiplication table of $\mathcal{K}$, is equal to
$$2w(\mathcal{N})+\sum_{0\le i,j\le n-1}
\varphi(\sum_{r=0}^{n-1}t_{j-i,\ell-i}t_{r,\ell}).$$
 
\item The $\ell$-th multiplication table of $\mathcal{K}$, for $2n\le \ell\le 3n-1$, 
 has $3w(\mathcal{N})$ non-zeros entries.
\end{enumerate}
The density of $\mathcal{K}$ is given by
$$d(\mathcal{K})=6d(\mathcal{N})
+ 3 \sum_{0\le \ell\le n-1} \sum_{0\le i,j\le n-1}
\varphi(\sum_{0\le r \le n-1}t_{j-i,r-i}t_{r,\ell}).$$
\end{lemma}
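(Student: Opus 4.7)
The plan is to adapt the block-matrix analysis developed in the proof of Proposition \ref{prop:1} (item 4) to this degree $3$ Kummer setting. First I would write out the multiplication tables of $\mathcal{K}$ as a $3\times 3$ block matrix whose $(a,b)$-block ($a,b\in\{0,1,2\}$) is $(\beta^{a+b}\alpha^{2^i}\alpha^{2^j})_{0\le i,j\le n-1}$. Using the Kummer relation $\beta^3=\alpha$, the blocks simplify: the $(1,2)$ and $(2,1)$ blocks become $(\alpha\,\alpha^{2^i}\alpha^{2^j})_{i,j}$, and the $(2,2)$ block becomes $(\alpha\beta\,\alpha^{2^i}\alpha^{2^j})_{i,j}$, while all other blocks keep their original form with a factor among $1,\beta,\beta^2$.

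Next, for each fixed range of $\ell$, I would identify exactly which blocks contribute to $T_\ell$ by reading off the $(\alpha^{2^{\ell\bmod n}}\beta^{\lfloor\ell/n\rfloor})$-coefficient in every block entry. Concretely: for $0\le\ell\le n-1$, only the blocks of the form $(\alpha^{2^i}\alpha^{2^j})$ and $(\alpha\,\alpha^{2^i}\alpha^{2^j})$ contribute; for $n\le\ell\le 2n-1$, only the blocks carrying a single $\beta$, i.e.\ $(\beta\,\alpha^{2^i}\alpha^{2^j})$ and $(\alpha\beta\,\alpha^{2^i}\alpha^{2^j})$, contribute; for $2n\le\ell\le 3n-1$, only the three blocks of the form $(\beta^2\,\alpha^{2^i}\alpha^{2^j})$ contribute.

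The counting in each block is already done in the proof of Proposition \ref{prop:1}. Indeed, the identity $\alpha^{2^i}\alpha^{2^j}=\sum_{r}t_{j-i,r-i}\alpha^{2^r}$ (subscripts mod $n$) shows that every block of the form $(\alpha^{2^i}\alpha^{2^j})_{i,j}$ contributes $w(\mathcal{N})$ non-zero entries to each $T_\ell$ in its $\alpha^{2^\ell}$-slab (by the change of variables $u=j-i$, $v=\ell-i$, each non-zero $t_{u,v}$ is hit exactly once). Combining with equation $(\ref{eq:8})$, every block of the form $(\alpha\,\alpha^{2^i}\alpha^{2^j})_{i,j}$ contributes $\sum_{i,j}\varphi\!\bigl(\sum_{r}t_{j-i,r-i}t_{r,\ell}\bigr)$ non-zero entries (with the obvious shift $\ell \mapsto \ell-n$ in the second range). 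Aggregating the three blocks of type $\alpha$, $\alpha\beta$, $\beta^2$ or their translates produces the three per-table counts claimed in items 1--3.

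Finally, the density formula follows by summing over all $\ell$. The pure $w(\mathcal{N})$-terms sum to $(n+2n+3n)w(\mathcal{N})=6n\,w(\mathcal{N})=6d(\mathcal{N})$, while the cross-product sums in items~1 and 2 each produce one copy of $\sum_{\ell=0}^{n-1}\sum_{i,j}\varphi\!\bigl(\sum_{r}t_{j-i,r-i}t_{r,\ell}\bigr)$, and by re-indexing these combine to $3$ copies (two from the range $0\le\ell\le n-1$ and one from $n\le\ell\le 2n-1$), yielding the stated formula for $d(\mathcal{K})$. The only real obstacle is the careful bookkeeping to make sure each block contributes exactly once and to the correct slab $T_\ell$; once this is in place the arithmetic reduces to the identities already established for Proposition \ref{prop:1}.
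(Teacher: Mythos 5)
Your proposal follows essentially the same route as the paper's proof: the same $3\times 3$ block matrix of products $\beta^{a+b}\alpha^{2^i}\alpha^{2^j}$ reduced via $\beta^3=\alpha$, the same identification of which blocks feed each slab of tables $T_\ell$ (one block of type $1$ plus two of type $\alpha$; two of type $\beta$ plus one of type $\alpha\beta$; three of type $\beta^2$), and the same per-block counts $w(\mathcal{N})$ and $\sum_{i,j}\varphi\bigl(\sum_r t_{j-i,r-i}t_{r,\ell}\bigr)$ inherited from the proof of Proposition \ref{prop:1}. The argument and the final aggregation into the density formula are correct and match the paper.
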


\begin{proof}
The multiplication tables
$T_\ell$ of $\mathcal{K}$ are given by the block matrix
$$\left( \begin{array}{c|c|c}
& & \\ 
(\alpha^{2^i}\alpha^{2^j})_{0\le i,j\le n-1} & (\beta\alpha^{2^i}\alpha^{2^j})_{0\le i,j\le n-1}
& (\beta^2\alpha^{2^i}\alpha^{2^j})_{0\le i,j\le n-1}\\ 
& & \\ \hline
& & \\ 
(\beta \alpha^{2^i}\alpha^{2^j})_{0\le i,j\le n-1} & (\beta^2\alpha^{2^i}\alpha^{2^j})_{0\le i,j\le n-1}
& (\alpha\alpha^{2^i}\alpha^{2^j})_{0\le i,j\le n-1}\\ 
& & \\ \hline
& & \\ 
(\beta^2\alpha^{2^i}\alpha^{2^j})_{0\le i,j\le n-1} & (\alpha\alpha^{2^j}\alpha_j)_{0\le i,j\le n-1}
& (\beta\alpha\alpha^{2^i}\alpha^{2^j})_{0\le i,j\le n-1}\\
& & \\
\end{array} \right)
$$
\begin{enumerate}[1.] 
\item For \ $0\le \ell\le n-1$, the components of the matrix
$T_\ell$ come from 1 block $(\alpha^{2^i}\alpha^{2^j})_{0\le i,j\le n-1}$ and 
2 blocks $(\alpha \alpha^{2^i}\alpha^{2^j})_{0\le i,j\le n-1}$. 
Using the same argument as in the proof of Proposition \ref{prop:1},
we conclude that the total number of non-zero entries in $T_\ell$ is equal to
$$w(\mathcal{N})+2 \sum_{0\le i,j\le n-1}
\varphi(\sum_{r=0}^{n-1}t_{j-i,r-i}t_{r,\ell}),$$
where subscripts are taken modulo $n$,
$(t_{i,j})_{0\le i,j\le n-1}$ is the multiplication table
of $\mathcal{N}$, and 
$\varphi$ is the unique ring homomorphism
from $\bF_{\!2}$ into  $\mathbb{Z}$.
\item For \ $n\le \ell\le 2n-1$, the components of the matrix
$T_\ell$ come from 2 blocks $(\beta\alpha_i\alpha_j)_{0\le i,j\le n-1}$ and 
1 block $(\beta\alpha \alpha_i\alpha_j)_{0\le i,j\le n-1}$.
So the total number of non-zero entries in $T_k$ is equal to
$$2w(\mathcal{N})+ \sum_{0\le i,j\le n-1}
\varphi(\sum_{r=0}^{n-1}t_{j-i,r-i}t_{r,\ell}).$$

\item For \ $2n\le \ell\le 3n-1$, the components of the matrix
$T_\ell$ come from 3 blocks $(\beta^2\alpha_i\alpha_j)_{0\le i,j\le n-1}$.
So the total number of non-zero entries in $T_\ell$ is equal to $3w(\mathcal{N})$.
\end{enumerate}
\end{proof}

\begin{table}
\caption{Sums of cross-products of the multiplication table of the best known
normal bases of $\bF_{\!2^n}/\bF_{\!2}$, for even integers $2\le n\le 14$}\label{Tableau2}
$\begin{array}{|c|c|c|c|}\hline
n & \text{Modulus} & \text{Normal elements}& \sum_{0\le \ell\le n-1} \sum_{0\le i,j\le n-1}
\varphi(\sum_{0\le r \le n-1}t_{j-i,r-i}t_{r,\ell})\\ \hline
2&1+x+x^2& x& 5\\ \hline
4&1+x+x^4&x^3& 25 \\ \hline
6&1+x+x^6& x^3+x^4+x^5&101 \\ \hline
8&1+x+x^3+x^4+x^8&x^6+x^7& 233\\ \hline
10&1+x^3+x^{10} & x^3+x^5+x^7+x^9 &181\\ \hline
12&1+x^3+x^{12}&  {\begin{array}{l} x^2+x^3+x^4+x^5\\
+x^6+x^7+x^8+x^9 \end{array}} & 265 \\ \hline
14& 1+x^5+x^{14}& {\begin{array}{l} x^5+x^6+x^7+x^9\\
+x^{12}+x^{13}\end{array}} & 677 \\ \hline
\end{array}
$
\end{table}

\begin{table}
\caption{Sums of cross-products of the multiplication table of the best known
normal bases of $\bF_{\!2^n}/\bF_{\!2}$, for even integers $16\le n\le 26$}\label{Tableau3}
$\begin{array}{|c|c|c|c|c|}\hline
n & \text{Modulus} & \text{Normal elements}& \sum_{0\le \ell\le n-1} \sum_{0\le i,j\le n-1}
\varphi(\sum_{0\le r \le n-1}t_{j-i,r-i}t_{r,\ell})\\ \hline
16& 1+x^3+x^{16}+x^{16}& {\begin{array}{l}  x^6+x^8+x^9+x^{11}+x^{12}\\
+x^{13}+x^{14}+x^{15}\end{array}} & 1921\\ \hline
18& 1+x^3+x^{18}& {\begin{array}{l}  x^4+x^5+x^7+x^8+x^9\\
+x^{11}+x^{15}+x^{16}+x^{17} \end{array}} & 613 \\ \hline
20&  1+x^3+x^{20}& {\begin{array}{l}  x^3+x^8+x^{11}+x^{15}+x^{16}\\
+x^{17}+x^{18}+x^{19} \end{array}} & 1625 \\ \hline
22&1+x+x^{22}& {\begin{array}{l} x^8+x^{11}+x^{12}\\
+x^{19}+x^{20}+x^{21} \end{array}}  & 2005 \\ \hline
24& 1+x+x^3+x^4+x^{24} & {\begin{array}{l}
x^5+x^6+x^{10}+x^{16}\\
+x^{17}+x^{18}+x^{19}+x^{23} \end{array}} & 3961\\ \hline
26&1+x+x^3+x^4+x^{26} &{\begin{array}{l}
 x^5+x^{10}+x^{12}+x^{15}+x^{16}\\
+x^{19} +x^{20}+x^{21}+x^{22}\\
 +x^{23}+x^{25}\end{array}} &2501
 \\ \hline
\end{array}
$
\end{table}

We computed the sums 
$$\sum_{\ell=0}^{n-1}\sum_{0\le i,j\le n-1}
\varphi(\sum_{r=0}^{n-1}t_{j-i,r-i}t_{r,\ell})$$
of cross-products of the multiplication table of the best known
normal bases of $\ \mathbf{F}_{\!2^n}/\bF_{\!2}$ for even integers $2\le n\le 26$.
The results are provided by tables \ref{Tableau2} and \ref{Tableau3}. 
These sums are useful when computing densities of Kummer extended bases from formula
given in Lemma \ref{lem:1}. To design the tables we used
[\cite{Mullen-Panario13}, Section 2.2] and the website accompanying it
which is available at 
\href{https://people.math.carleton.ca/~daniel/hff/}{https://people.math.carleton.ca/~daniel/hff/}.


\section{Towers of extensions}

It is clear that extended bases obtained by iterating Artin-Schreier theory
corresponds to extended bases constructed from Artin-Schreier-Witt theory. 
In this section, we study extended bases in the context of towers
of field extensions constructed from Kummer theory.
We are also interested in towers
combining Artin-Schreier and Kummer theories.
Indeed any primitive normal basis of $\ \mathbf{F}_{\!2^n}/\bF_{\!2}$
admits a Kummer extension
of degree  $d$, provided $d$ divides $2^n-1$.
A question is whether the Kummer extended basis itself admits a 
Kummer extension or an Artin-Schreier extension.

\begin{lemma}\label{lem:2}
Let $\mathcal{N}=(\alpha, \alpha^{2}, \ldots,\alpha^{2^{n-1}})$
be a normal basis of \ $\bF_{\!2^n}/\bF_{\!2}$.

\begin{enumerate}[1.]
\item There exists $\beta$ in $\bF_{\!2^{2n}}$ such that 
$\mathcal{A}=\mathcal{N}\cup \beta \mathcal{N}$ is an Artin-Schreier 
extension of $\mathcal{N}$.
\begin{enumerate}
\item The polynomial $X^2+X+\beta$
is irreducible over $\bF_{\!2^{2n}}$ if and only if
 $n$ is odd (if that is the case one says that $\mathcal{N}$
admits a degree $4$ Artin-Schreier-Witt
extension, or a {\it biquadratic Artin-Schreier extension}).
\item Assume that $3$ divides $2^{2n}-1$.
Then the polynomial $X^3+\beta$ is irreducible over $\bF_{\!2^{2n}}$
if and only if the class of $\beta$ generates
$\bF_{\!2^{2n}}^*/\bF_{\!2^{2n}}^{*3}$
(if that is the case one says that the Artin-Schreier extension $\mathcal{A}$
admits a degree $3$ Kummer extension).
\end{enumerate}

\item Assume that $3$ divides $2^{n}-1$ and that
$\beta$ is an element  in $\bF_{\!2^{3n}}$ such that
\ $\mathcal{K}=\mathcal{N}\cup \beta\mathcal{N}\cup \beta^2\mathcal{N}$
is a degree $3$ Kummer extension of $\mathcal{N}$. Then:
\begin{enumerate}
\item The polynomial $X^2+X+\beta$ is always reducible over $\bF_{\!2^{3n}}$
(one says that the Kummer extension $\mathcal{K}$
admits no Artin-Schreier extension).

\item If $\mathcal{N}$ is a primitive normal basis, and
if the $3$-adic valuation satisfies $$v_3\Big(\frac{2^{3n}-1}{2^n-1}\Big)=1,$$
then the polynomial $X^3+\beta$ is irreducible over $\bF_{\!2^{3n}}$
(in that case one says that $\mathcal{N}$
admits a { \it bicubic Kummer extension}). 
\end{enumerate}
\end{enumerate}
\end{lemma}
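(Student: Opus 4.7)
The plan is to handle each item by invoking either the additive form of Hilbert's Theorem~90 (for the Artin-Schreier claims) or the standard Kummer criterion stating that $X^3-a$ is irreducible over a field containing a primitive cube root of unity if and only if $a$ is not a cube (for the Kummer claims). I will exploit the fact that in part~1 the minimal polynomial of $\beta$ over $\bF_{\!2^n}$ is $X^2+X+\alpha$, while in part~2 it is $X^3+\alpha$.

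For item~1(a), the unique nontrivial conjugate of $\beta$ over $\bF_{\!2^n}$ is $\beta^{2^n}=\beta+1$ (the other root of $X^2+X+\alpha$), so $\Tr_{\bF_{\!2^{2n}}/\bF_{\!2^n}}(\beta)=1$; transitivity then yields $\Tr_{\bF_{\!2^{2n}}/\bF_{\!2}}(\beta)=\Tr_{\bF_{\!2^n}/\bF_{\!2}}(1)=n \bmod 2$, and additive Hilbert~90 gives irreducibility of $X^2+X+\beta$ over $\bF_{\!2^{2n}}$ if and only if $n$ is odd. For item~1(b), observe that $3\mid 2^{2n}-1$ holds automatically, so $\bF_{\!2^{2n}}$ contains a primitive cube root of unity, and the quotient $\bF_{\!2^{2n}}^*/\bF_{\!2^{2n}}^{*3}$ has order exactly~$3$; hence the Kummer criterion, which demands $\beta$ not be a cube, is equivalent to the class of $\beta$ generating this quotient.

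For item~2(a), since $3\mid 2^n-1$, $\bF_{\!2^n}$ contains a primitive cube root of unity $\omega$, and the three $\bF_{\!2^n}$-conjugates of $\beta$ are $\beta,\omega\beta,\omega^2\beta$. Their sum is $\beta(1+\omega+\omega^2)=0$, so $\Tr_{\bF_{\!2^{3n}}/\bF_{\!2^n}}(\beta)=0$ and therefore $\Tr_{\bF_{\!2^{3n}}/\bF_{\!2}}(\beta)=0$; additive Hilbert~90 produces a root of $X^2+X+\beta$ in $\bF_{\!2^{3n}}$.

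Item~2(b) is the main obstacle. First I determine the order of $\beta$ in $\bF_{\!2^{3n}}^*$: any $k$ with $\beta^k=1$ forces $\alpha^k=1$, hence $(2^n-1)\mid k$; combined with $\beta^{3(2^n-1)}=\alpha^{2^n-1}=1$ and the exclusion of order $2^n-1$ (which would put $\beta$ in $\bF_{\!2^n}$), this gives $\ord(\beta)=3(2^n-1)$. Writing $\bF_{\!2^{3n}}^*=\langle g\rangle$ and $\beta=g^m$, this forces $\gcd(m,2^{3n}-1)=(2^{2n}+2^n+1)/3$. Now $\beta$ is a cube in $\bF_{\!2^{3n}}^*$ if and only if $3\mid m$, equivalently $3$ divides $\gcd(m,2^{3n}-1)=(2^{2n}+2^n+1)/3$, equivalently $v_3((2^{3n}-1)/(2^n-1))\geq 2$. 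The hypothesis $v_3((2^{3n}-1)/(2^n-1))=1$ therefore excludes $\beta$ from the cubes, and Kummer theory yields irreducibility of $X^3+\beta$ over $\bF_{\!2^{3n}}$.
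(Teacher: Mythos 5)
Your proof is correct and, for the parts the paper actually argues (items 2(a) and 2(b)), follows essentially the same route: the trace computation $\Tr_{\bF_{\!2^{3n}}/\bF_{\!2^n}}(\beta)=0$ plus additive Hilbert 90 for 2(a), and the order count $\ord(\beta)=3(2^n-1)$ combined with the $3$-adic valuation hypothesis to rule out $\beta$ being a cube for 2(b). The only divergence is that for item 1 and 1(a) the paper simply cites Thomson--Weir (Lemmas 3.4 and 5.1), whereas you supply the underlying computation $\beta^{2^n}=\beta+1$, hence $\Tr_{\bF_{\!2^{2n}}/\bF_{\!2}}(\beta)=n\bmod 2$, which is a correct and self-contained replacement for that citation; note only that you take the existence of $\beta$ in item 1 as given, which is legitimate since it is established earlier in Section 2.1 via the nonvanishing of $\Tr_{\bF_{\!2^n}/\bF_{\!2}}(\alpha)$.
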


\begin{proof}
\begin{enumerate}[1.]
\item This is [\cite{Thomson-Weir}, Lemma 3.4].
\begin{enumerate}
\item This assertion corresponds [\cite{Thomson-Weir}, Lemma 5.1].
\item The assertion follows from
[\cite{Lang}, Chapter VI, Theorem 6.2] or [\cite{Bourbakialg4.5}, A V.84].
\end{enumerate}

\item 
\begin{enumerate}
\item The characteristic  polynomial of $\beta$ over $\bF_{\!2^{n}}$  is
$X^3-\alpha \in \bF_{\!2^{n}}[X].$
So $\mathrm{Tr}_{\bF_{\!2^{3n}}/\bF_{\!2^{n}}}(\beta)=0$.
We have
$$
\mathrm{Tr}_{\bF_{\!2^{3n}}/\bF_{\!2}}(\beta)=
\mathrm{Tr}_{\bF_{\!2^{n}}/\bF_{\!2}}(\mathrm{Tr}_{\bF_{\!2^{3n}}/\bF_{\!2^n}}(\beta))
=0.$$
From [\cite{Lang}, Chapter VI, Theorem 6.3],
there exists $\gamma$ in
$\bF_{\!2^{3n}}$ such that $\beta=\gamma^2+\gamma$. So
$X^2+X+\beta$ is a reducible polynomial
over $\bF_{\!2^{3n}}$.

\item We know that $\beta^3=\alpha$. So
$\beta$ has order $3(2^n-1)$ in 
$\bF_{\!2^{3n}}^*$ because $3$ divides $2^n-1$ and
$\alpha$ generates $\bF_{\!2^{n}}^*$.
Let $\delta$ be a generator of $\bF_{\!2^{3n}}^*$.
Then there exists an integer $r\ge 1$ which is prime to $2^{3n}-1$ such that
$$\beta=\delta^{\frac{r(2^{3n}-1)}{3(2^n-1)}}.$$
Since $v_3\Big(\frac{2^{3n}-1}{2^n-1}\Big)=1$ and $r$ is
prime to $3$, we have
$$v_3\Big(\frac{r(2^{3n}-1)}{3(2^n-1)}\Big)=0.$$
So $\beta$ is not a cube in $\mathbf{\bF}_{\!2^{3n}}$.
We conclude that $\bF_{\!2^{3n}}(\beta)$ is a degree $3$ cyclic extension
of $\bF_{\!2^{3n}}$ by [\cite{Lang}, Chapter VI, Theorem 6.2] or [\cite{Bourbakialg4.5}, A V.84].
\end{enumerate}
\end{enumerate}
\end{proof}

\section{Conclusion}

This paper presents bases of $\bF_{\!2^{nd}}/\bF_{\!2}$
constructed by extending normal bases of $\bF_{\!2^n}/\bF_{\!2}$
from Artin-Schreier theory and Kummer theory respectively.
In case $d$ is equal to $2$, $3$ and $4$, we explain how squaring in
$\bF_{\!2^{nd}}$ can be efficiently computed from the extended bases.
We also explain how a Karatsuba-like multiplication algorithm 
may be used to efficiently compute the product of two elements
in $\bF_{\!2^{nd}}$.
Then we specify conditions under which Artin-Schreier and Kummer
theories may be combined in order to extend normal bases of $\bF_{\!2^n}/\bF_{\!2}$.

From the study made in Sections
2 and 3, we can actually determine properties of Kummer extensions
of an Artin-Schreier extended basis.
Indeed let $\mathcal{N}=(\alpha, \alpha^{2}, \ldots,\alpha^{2^{n-1}})$ be
a normal basis of \ $\bF_{\!2^n}/\bF_{\!2}$ and
$$\mathcal{A}=\mathcal{N}\cup \beta \mathcal{N}$$
an Artin-Schreier extension of $\mathcal{N}$. 
Assume that $3$ divides $2^{2n}-1$. Assume that $\beta$
is not a cube in $\mathbf{\bF}_{\!2^{2n}}$.
Let $\gamma$ be an
element of $\bF_{\!2^{6n}}$ such that 
$$\mathcal{K}\!\mathcal{A}=\mathcal{A}\cup \gamma \mathcal{A}\cup \gamma^2 \mathcal{A}$$
is a degree $3$ Kummer extension of $\mathcal{A}$ (see Lemma \ref{lem:2} $1.(b)$).
Multiplications in $\bF_{\!2^{6n}}$ with respect to
$\mathcal{K}$ is described from Propositions \ref{prop:1}
and \ref{prop:3}. On the one hand, squaring an element 
$$X=(A+\beta B)+\gamma (C+\beta D)
+\gamma^2(E+\beta F) \in \bF_{\!2^{6n}}$$
 is given by
$$
\begin{array}{rl}
X^2 = & (A+\beta B)^2+\gamma^2 (C+\beta D)^2
+\gamma^4(E+\beta F)^2\\
=& \Big( (A_{>} + \ ^t\!B_{>} \times T) +\beta B_{>} \Big)+ \gamma
\Big(\  ^t\!F_{>}  \times T+  \beta ( E_{>}+ F_{>} +\  ^t\!F_{>}  \times T)\Big)\\
 + & \gamma^2 \Big( (C_{>} + \  ^t\!D_{>} \times T) + \beta D_{>} \Big),
\end{array}
$$
where $A_{>}, B_{>}, C_{>},D_{>},E_{>}$, $F_{>}$
stand for right-cyclic shifts of the coordinate vectors
of \ $A$, $B$, $C$, $D$, $E$, $F$, and 
 $$\  ^t\!X \times T$$
is a vector-matrix multiplication between the transpose of $X$
and the multiplication table of \ $\mathcal{N}$.
On the other hand, the product of two distinct elements 
$$
X_1=(A_1+\beta B_1)+\gamma (C_1+\beta D_1)
+\gamma^2(E_1+\beta F_1)   \ \text{ and } \
X_2=(A_2+\beta B_2)+\gamma (C_2+\beta D_2)
+\gamma^2(E_2+\beta F_2)
$$
is given by
$$
\small{\begin{array}{lll}
X_1 \times X_2 & = & (A_1+\beta B_1)(A_2+\beta B_2)\\
& +& \alpha \Bigg( (A_1+\beta B_1)(A_2+\beta B_2) 
 +  \Big((A_1+\beta B_1)+(C_1+\beta D_1)\Big)
\Big((A_2+\beta B_2)+(C_2+\beta D_2)\Big) \\
&+ &
\Big( (A_1+\beta B_1)+(E_1+\beta F_1)\Big)
\Big((A_2+\beta B_2)+(E_2+\beta F_2) \Big)\\
& + & 
\Big( (A_1+\beta B_1)+ (C_1+\beta D_1)+(E_1+\beta F_1)\Big)
\Big((A_2+\beta B_2)+(C_2+\beta D_2)+(E_2+\beta F_2) \Big) \Bigg)\\
&+& +\beta \Bigg( (A_1+\beta B_1)(A_2+\beta B_2)+ (C_1+\beta D_1)(C_2+\beta D_2)
\alpha (E_1+\beta F_1)(E_2+\beta F_2)\\
&+&  \Big((A_1+\beta B_1)+(C_1+\beta D_1)\Big)
\Big((A_2+\beta B_2)+(C_2+\beta D_2)\Big) \Bigg) \\
& + &  \beta^2 \Bigg( (A_1+\beta B_1)(A_2+\beta B_2) + (C_1+\beta D_1)(C_2+\beta D_2)
(E_1+\beta F_1)(E_2+\beta F_2)\\
& + & \Big( (A_1+\beta B_1)+(E_1+\beta F_1)\Big)
+\Big((A_2+\beta B_2)+(E_2+\beta F_2) \Big) \Bigg).
 \end{array}}
$$

Moreover, the density of $\mathcal{K}\!\mathcal{A}$ is computed from the following block matrix

\vspace{.25cm}

$$
\small{\left( \begin{array}{c|c|c|c|c|c}
& & & & & \\ 
(\alpha^{2^i}\alpha^{2^j}) & (\beta\alpha^{2^i}\alpha^{2^j})
& (\gamma\alpha^{2^i}\alpha^{2^j}) & (\gamma\beta\alpha^{2^i}\alpha^{2^j}) &
(\gamma^2\alpha^{2^i}\alpha^{2^j}) & (\gamma^2\beta\alpha^{2^i}\alpha^{2^j}) \\ 
& & & & &\\ \hline
& & & & & \\ 
(\beta\alpha^{2^i}\alpha^{2^j}) & (\beta^2\alpha^{2^i}\alpha^{2^j})
& (\gamma\beta\alpha^{2^i}\alpha^{2^j}) & (\gamma\beta^2\alpha^{2^i}\alpha^{2^j}) &
(\gamma^2\beta\alpha^{2^i}\alpha^{2^j}) & (\gamma^2\beta^2\alpha^{2^i}\alpha^{2^j}) \\ 
& & & & & \\ \hline
& & & & & \\ 
(\gamma\alpha^{2^i}\alpha^{2^j}) & (\gamma \beta\alpha^{2^i}\alpha^{2^j})
& (\gamma^2\alpha^{2^i}\alpha^{2^j}) & (\gamma^2\beta\alpha^{2^i}\alpha^{2^j}) &
(\beta\alpha^{2^i}\alpha^{2^j}) & (\beta^2\alpha^{2^i}\alpha^{2^j}) \\ 
& & & & &\\ \hline
& & & & & \\ 
(\gamma\beta\alpha^{2^i}\alpha^{2^j}) & (\gamma\beta^2\alpha^{2^i}\alpha^{2^j})
& (\gamma^2\beta\alpha^{2^i}\alpha^{2^j}) & (\gamma^2\beta^2\alpha^{2^i}\alpha^{2^j}) &
(\beta^2\alpha^{2^i}\alpha^{2^j}) & (\beta^3\alpha^{2^i}\alpha^{2^j}) \\ 
& & & & &\\ \hline
& & & & & \\ 
(\gamma^2\alpha^{2^i}\alpha^{2^j}) & (\gamma^2 \beta\alpha^{2^i}\alpha^{2^j})
& (\beta\alpha^{2^i}\alpha^{2^j}) & (\beta^2\alpha^{2^i}\alpha^{2^j}) &
(\gamma\beta\alpha^{2^i}\alpha^{2^j}) & (\gamma\beta^2\alpha^{2^i}\alpha^{2^j}) \\ 
& & & & &\\ \hline
& & & & & \\ 
(\gamma^2\beta\alpha^{2^i}\alpha^{2^j}) & (\gamma^2\beta^2\alpha^{2^i}\alpha^{2^j})
& (\beta^2\alpha^{2^i}\alpha^{2^j}) & (\beta^3\alpha\alpha^{2^i}\alpha^{2^j}) &
(\gamma\beta^2\alpha^{2^i}\alpha^{2^j}) & 
(\gamma\beta^3\alpha\alpha^{2^i}\alpha^{2^j}) \\ 
& & & & &\\
\end{array} \right)}
$$

\vspace{.5cm}

If the original normal basis $\mathcal{N}$ has quasi-linear 
complexity $O(n\log n \vert \log \log n\vert)$
and linear weight $O(n)$,
then $\mathcal{A}$ has also quasi-linear complexity and 
its density is quadratic by Proposition \ref{prop:1}.
From argument analogous to the one used in the proof
of Proposition \ref{prop:4},
we deduce that $\mathcal{K}\!\mathcal{A}$ has quasi-linear complexity.
Obviously, degree $d$ Kummer extensions of an Artin-Schreier extension of
 a normal basis $\mathcal{N}$
of $\bF_{\!2^n}/\bF_{\!2}$
are  useful when doing arithmetic in
$\bF_{\!2^{2nd}}$ provided that $d$ is not too large.
In order to fully take advantage of properties of the original
normal basis, we are only authorized
to construct Kummer extended bases with low degrees.

We know (from equation $(\ref{eq:17})$) that the complexity
of a multiplication algorithm using multiplication tables
of a basis $\mathcal{B}$ depends on the density of
$\mathcal{B}$. So density is an important criterion
when selecting efficient extended bases.
Since polynomials of the form
$X^3+\alpha$ are sparser than the ones of the form $X^2+X+\alpha$,
we guess that there are many cases for which extended bases constructed from Kummer
theory have better densities than the ones from Artin-Schreier theory.
We used Magma \cite{Magma}  to construct
Table \ref{Tableau1} which confirm our guess by comparing the densities of 
the best known Kummer extended bases to  the densities of the
best known Artin-Schreier extended bases
and  the densities of the best known normal bases of $\bF_{\!2^m}/\bF_{\!2}$
in case $6\le m\le 78$. 
We observe that (when they exist) Kummer extended 
bases have better densities than
both others, except in cases $m\in \{18, 24\}$ for which
the densities of Kummer extended bases lie between the
densities of both others.

\bigskip

\vspace{.5cm}

\subsection*{Acknowledgments}
The work reported in this paper is supported by Simons Foundation
via PREMA project, and the Inria International Lab LIRIMA via the Associate team FAST.
The first author acknowledges 
the International Centre for Theoretical Physics (ICTP) for their hospitality
within the framework of Associate Scheme.
The authors would like to thank Jean-Marc Couveignes for his comments on early version of this work.
We also thank the anonymous referee for
various comments that were helpful
for the improvement of the exposition.

\begin{table}
\caption{Best known densities $d(\mathcal{K})$ of Kummer extended bases,
versus best known densities $d(\mathcal{A})$ of Artin-Schreier extended bases
 and best known densities $d(\mathcal{N})$ of normal bases
of $\bF_{\!2^m}/\bF_{\!2}$, for integers $6\le m\le 78$ which are multiple of $6$.
All these bases are computed from the best known normal elements given in tables \ref{Tableau2}
and \ref{Tableau3}.
Bold entries indicate that Kummer extended bases are better than both others.
Minus symbol indicates that there is no normal element  in tables \ref{Tableau2}
or \ref{Tableau3} which generates a normal basis admiting a 
degree $3$ Kummer extension.
Blanc indicates that data are not available in the literature.}\label{Tableau1}
$\begin{array}{lll}
{\begin{array}{|c|c|c|c|}\hline
m & d(\mathcal{N})& d(\mathcal{A})& d(\mathcal{K})\\ \hline
6&66& 77 &\mathbf{51}\\ \hline
12&276& 365 &$-$ \\ \hline
18&630& 869 & 699 \\ \hline
24&2520& 1369 & 1707 \\ \hline
30&1770& 3805 & $-$ \\ \hline
36&2556&3133 & $-$ \\ \hline
42& 5670&10921 & \mathbf{4299} \\ \hline
48&20400&14041 & \mathbf{13923} \\ \hline
54&11286&23245 & $-$  \\ \hline
60& 7140& 10445 & $-$ \\ \hline
66& 8646&12677 &  $-$ \\ \hline
72&25704 & & $-$ \\ \hline
78& 18018& &\mathbf{15459} \\ \hline
\end{array}
}
\end{array}
$
\end{table}

 \vspace{2.58cm} 

\bibliographystyle{plain}

\bibliography{biblio_FF_Arithmetic_Char2}

\end{document}